\newtheorem{theorem}{Theorem}[section]
\newtheorem{step}{Step}
\newtheorem{lem}{Lemma}[section]
\newtheorem{remark}{Remark}[section]
\numberwithin{equation}{section}
\newtheorem*{theorem*}{Theorem}
\newtheorem{rem}{Observation}[section]
\newcommand{\R}{\mathbb{R}}
\newcommand{\C}{\mathcal{C}}
\newcommand{\p}{\partial}
\newcommand{\A}{\mathcal{A}}
\newcommand{\Rd}{{\mathbb{R}^d}}
\newcommand{\Prob}{\mathbb{P}}
\newcommand{\T}{\mathcal{T}}
\newcommand{\F}{\mathcal{F}}
\newcommand{\E}{\mathcal{E}}
\newcommand{\inp}{in}
\begin{document}

\subjclass{46E35, 46E40.}

\title[Interacting particles systems with delay]
{Interacting particles systems with delay and random delay differential equations}

\author[ J.P. Pinasco, M. Rodriguez Cartabia, N. Saintier  ]{
Juan Pablo Pinasco, Mauro Rodriguez Cartabia, Nicolas Saintier}

\address{Juan Pablo Pinasco,
\hfill\break\indent IMAS UBA-CONICET  and  Departamento  de Matem{\'a}tica,
 \hfill\break\indent Facultad de Ciencias Exactas y Naturales, Universidad de Buenos Aires,
 \hfill\break\indent  Av Cantilo s/n, Ciudad Universitaria
 \hfill\break\indent (1428) Buenos Aires, Argentina.}\email{jpinasco@dm.uba.ar }

\address{Mauro Rodriguez Cartabia,
\hfill\break\indent IMAS UBA-CONICET  and  Departamento  de Matem{\'a}tica,
 \hfill\break\indent Facultad de Ciencias Exactas y Naturales, Universidad de Buenos Aires,
 \hfill\break\indent  Av Cantilo s/n, Ciudad Universitaria
 \hfill\break\indent (1428) Buenos Aires, Argentina.}\email{mrodriguezcartabia@gmail.com}

\address{Nicolas Saintier,
\hfill\break\indent IMAS UBA-CONICET  and  Departamento  de Matem{\'a}tica,
 \hfill\break\indent Facultad de Ciencias Exactas y Naturales, Universidad de Buenos Aires,
 \hfill\break\indent  Av Cantilo s/n, Ciudad Universitaria
 \hfill\break\indent (1428) Buenos Aires, Argentina.}\email{nsaintie@dm.uba.ar}

\thanks{Supported by ANPCyT under grant PICT 2016-1022, by CONICET under  grant PIP 11220150100032CO, and by Universidad de
Buenos Aires under grant 2018 20020170100445BA. The authors are members of
CONICET, Argentina.}

\begin{abstract}
In this work we study a kinetic model of active particles with delayed dynamics, and its limit
when the number of particles goes to infinity. This limit turns out to be related to  delayed
differential equations with random initial conditions. We analyze two different dynamics,
one based on the full knowledge of the  individual trajectories of each particle, and another
one based  only on the trace of the particle cloud, loosing track of the individual
trajectories. Notice that in the first dynamic the state of a particles is its path, whereas  it is
simply a point in $\R^d$ in the second case. We analyse in both cases the corresponding
mean-field dynamic obtaining an equation for the time evolution of the  distribution of the
particles states. Well-posedness of the equation is proved by a fixed-point argument. We
conclude the paper with some possible future research directions and modelling
applications.
\end{abstract}

\keywords{mean field models, functional equations, kinetic equations.}

\maketitle


\section{Introduction}

The study of large systems of interacting particles attracted a great deal of attention in the
last years. Among many other topics, we can find a growing literature on flocking, crowds,
traffic, opinion dynamics, wealth distribution, and many other natural and social phenomena
studied using kinetic equations (see e.g. \cite{Bellomo}, \cite{PT}.). 
Here, we will consider delayed dynamics, where the agent
reaction depends on the recent history of other agents behavior,
 in some sense that will be clarified later.

\medskip

Let us  observe that in many cases a delay makes a priori no sense, since nobody try to
convince us of some political position that she/he held on the past, or our reactions in a
middle of a flock (a crowd, a traffic jam) depend only on the present state of the world.
However, the presence of some communication delay, or of a reaction time between the
signal and the response,  are reasonable assumption which immediately lead to introducing a
delay in the model (see for instance \cite{choicucker}, \cite{munz2008delay}). There are many other
contexts in which the recent history is clearly relevant. For instance suppose that many
agents are randomly matched in a two player game, each one is using a mixed strategy, and
they update their strategies after each pairwise encounter depending only on the last game
played, see \cite{pinasco2018game} for an example. On the other hand, using no-regret
algorithms \cite{jafari2001no}, the players try to improve their strategies by considering the
full history of games played.

\medskip

So, let us suppose we have an interacting population of $N$ agents labeled $i=1,\ldots,N$ evolving
  in  $\Rd$. We assume that interactions among agents are the only factor of movement.
In absence of delay we can model the evolution of each agent by the following coupled system:
\begin{equation}\label{classic}
\frac{d}{dt}x^{(i)}(t)
= \frac{1}{N}\sum_{j=1}^N K\left( x^{(i)}(t),x^{(j)}(t)\right),\qquad i=1,\ldots,N,
\end{equation}
where the function $K:\Rd \times \R^d \to \Rd $ models the interaction between any pair of agents.
This is a mean-field framework in the sense that each agent feels the mean influence of the rest of the population
and that interactions are only binary and modelled by the same function $K$ regardless of the particular agents involved in the interaction.
It is a classical setting relevant in various applications ranging from physics to biology and sociology.
In biology for instance we can think of $x^{(i)}$ as the pair (position, velocity) characterizing animal $i$.
The kernel $K$ can model a velocity- alignment mechanism as e.g. in the classical
Cucker-Smale flocking model, or synchronization of oscillators as in the Kuramoto model.
In sociology, $x^{(i)}$ could be the opinion of individual $i$, and $K$ then models
the variation of opinions due to interactions
among individuals (see e.g. \cite{paperopinion} or \cite{toscani2006kinetic} and the references therein).
We refer e.g. to the survey \cite{golsedynamics} for a detailed mathematical introduction to this subject in the absence of delay.

The main purpose of this work is to extend this theory by incorporating a general delay.
To take delay into account in the interaction between any two agents, say $i$ and $j$,
we suppose that agent $i$ reacts to the history
$\{x^{(j)}_t(s):=x^{(j)}(t-s),\, s\in [-\tau,0]\}$ of agent $j$ in the time window $[t-\tau,t]$.
Notice that $x^{(j)}_t \in C([-\tau,0],\R^d)$.
In the mean-field framework this leads to replace \eqref{classic} by
\begin{equation}\label{delayed}
\frac{d}{dt}x^{(i)}(t)
= \frac{1}{N}\sum_{j=1}^N K\left( x^{(i)}(t),x_t^{(j)}\right),\qquad i=1,\ldots,N,
\end{equation}
where $K:\Rd \times C([-\tau,0],\R^d) \to \Rd$. Notice that in the delayed setting the natural
variable to characterize the state of agent $i$ is not the value $x^{(i)}(t)\in \R^d$ at time $t$
but his history $x^{(i)}_t$ in the time window $[t-\tau,t]$.
Thus the natural
state space is not $\R^d$ but $E:=C([-\tau,0],\R^d)$. From a modelling point of view this
means that
\begin{itemize}
\item[(a)] each agents knows exactly the trajectories $x^{(j)}_t$, $t\ge 0$, of any other agents in the population.
\end{itemize}

It may happen however that some particular choice of kernel $K$ leads to a loss of information in
 the sense that agent $i$ does not react to the precise
trajectories $x_t^{(j)}$ of others agents $j$ in the population but only to the global
distribution of agents at each time $t+s$, $s\in [-\tau,0]$ without caring for the individual
trajectories. Consider e.g. the kernel $K$ given by
\begin{equation}\label{LossMemoryKernel}
 K(x^{(i)}(t),x^{(j)}_t):=
\int_{-\tau}^0 \tilde K\left(x^{(i)}(t),x^{(j)}(t+s)\right) \rho(ds),
\end{equation}
where $\tilde K:\R^d\times\R^d\to \R^d$, and  $\rho(ds)$ is a given probability measure on $[-\tau,0]$. System \eqref{delayed} becomes
\begin{equation}\label{delayed_Loss}
\frac{d}{dt}x^{(i)}(t)
= \int_{-\tau}^0 \frac1N \sum_{j=1}^N \tilde K\left(x^{(i)}(t),x^{(j)}(t+s)\right) \rho(ds)
 ,\qquad i=1,\ldots,N.
\end{equation}
Thus this particular kernel $K$ leads to a situation where information of the individual trajectories is lost so that
\begin{itemize}
\item[(b)]  each agent knows if one agent was located at position $x$ at some time $t+s$,   without knowing exactly which specific agent was there.
\end{itemize}
This means that agents have imperfect memory: they know the trace of the paths followed by others agents, but they are not able to identify which agent travel each path.
A typical situation depicted in Figure \ref{caminos} corresponds to
considering the path in the time window $[t-\tau,t]$ at time $t$ of two agents $x^{(1)}$ and $x^{(2)}$ namely 
$x^{(1)}_t,x^{(2)}_t\in E$. For ease of notation we let $\sigma^{(i)}:=x^{(i)}_t$, $i=1,2$. 
We suppose that these paths intersect ar time $t-h$ for some $h\in [-\tau,0]$ i.e. 
$\sigma^{(1)}(h)=\sigma^{(2)}(h)$, and consider the  paths
\begin{align*}
  \tilde \sigma^{(1)}=\sigma^{(1)}\mathbbm{1}_{[-\tau,h]}+\sigma^{(2)} \mathbbm{1}_{[h,0]},
\qquad  \tilde\sigma^{(2)}=\sigma^{(2)}\mathbbm{1}_{[-\tau,h]}+\sigma^{(1)} \mathbbm{1}_{[h,0]}.
\end{align*}
Then it is easily seen that for any $x\in\R^d$,
\begin{align*}
  \int_{-\tau}^0  K(x,\sigma^{(1)}(s))+ K(x,\sigma^{(2)}(s))\, d\rho(s)
 =\int_{-\tau}^0  K(x,\tilde\sigma^{(1)}(s))+K(x,\tilde\sigma^{(2)}(s))\, d\rho(s).
\end{align*}
Thus an agent located at $x$ cannot distinguish the precise individual trajectories but only the trails they left without knowing who is where.

This setting is well known in other problems in partial differential equations.
We mention for instance materials with memory as studied by Dafermos \cite{dafermos}.
As
observed in \cite{FabGP}, these kind of problems does not have the property of minimality,
since different initial data generate different solutions.

\begin{figure}
  \begin{minipage}{0.3\textwidth}
  \fbox{\adjustbox{trim={.051\width} {0.2\height} {0.01\width} {0.2\height},clip}
    {  \includegraphics[scale=0.15]{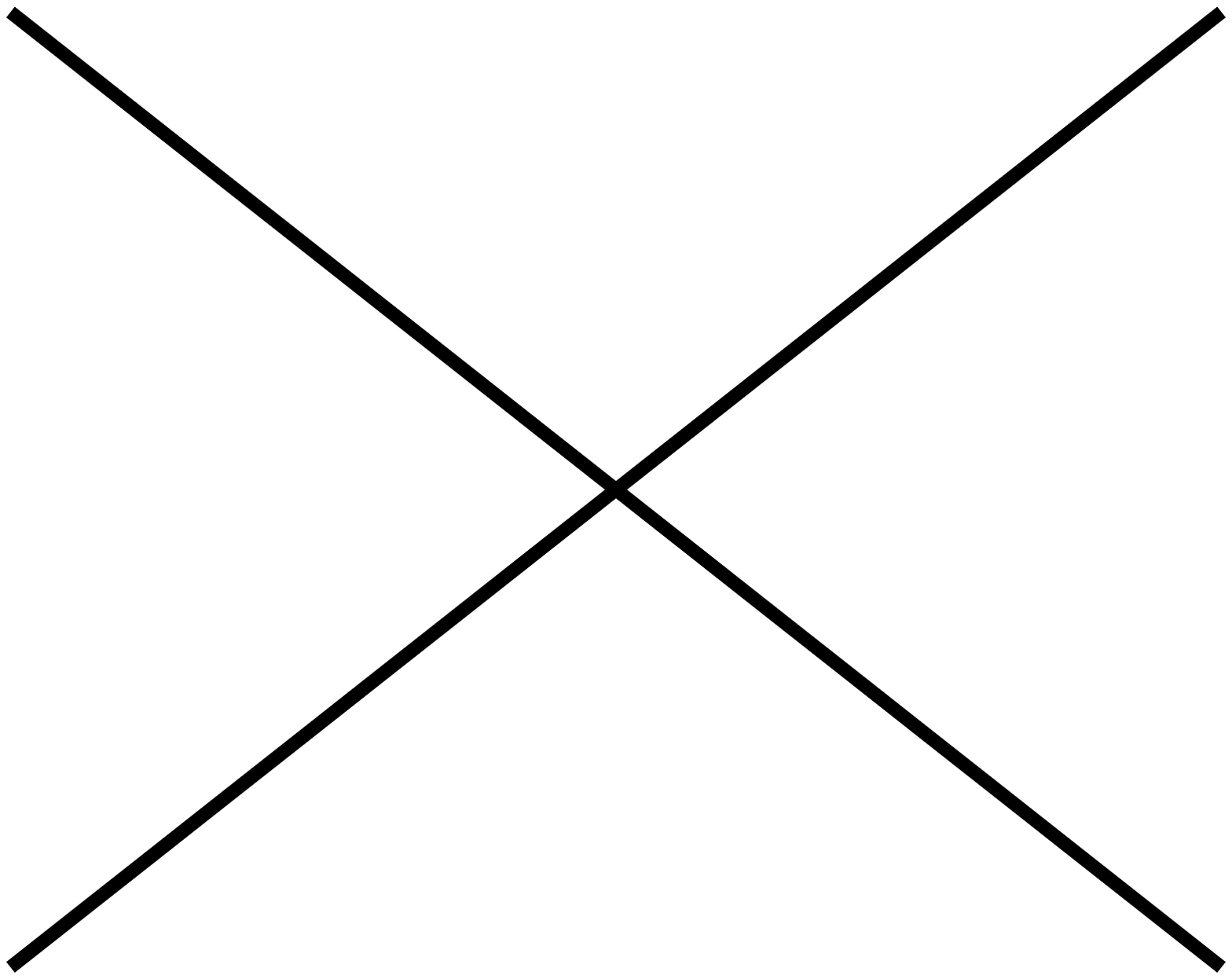}}}
  \end{minipage}
  \begin{minipage}{0.3\textwidth}
  \fbox{\adjustbox{trim={.051\width} {0.2\height} {0.01\width} {0.2\height},clip}
    {  \includegraphics[scale=0.15]{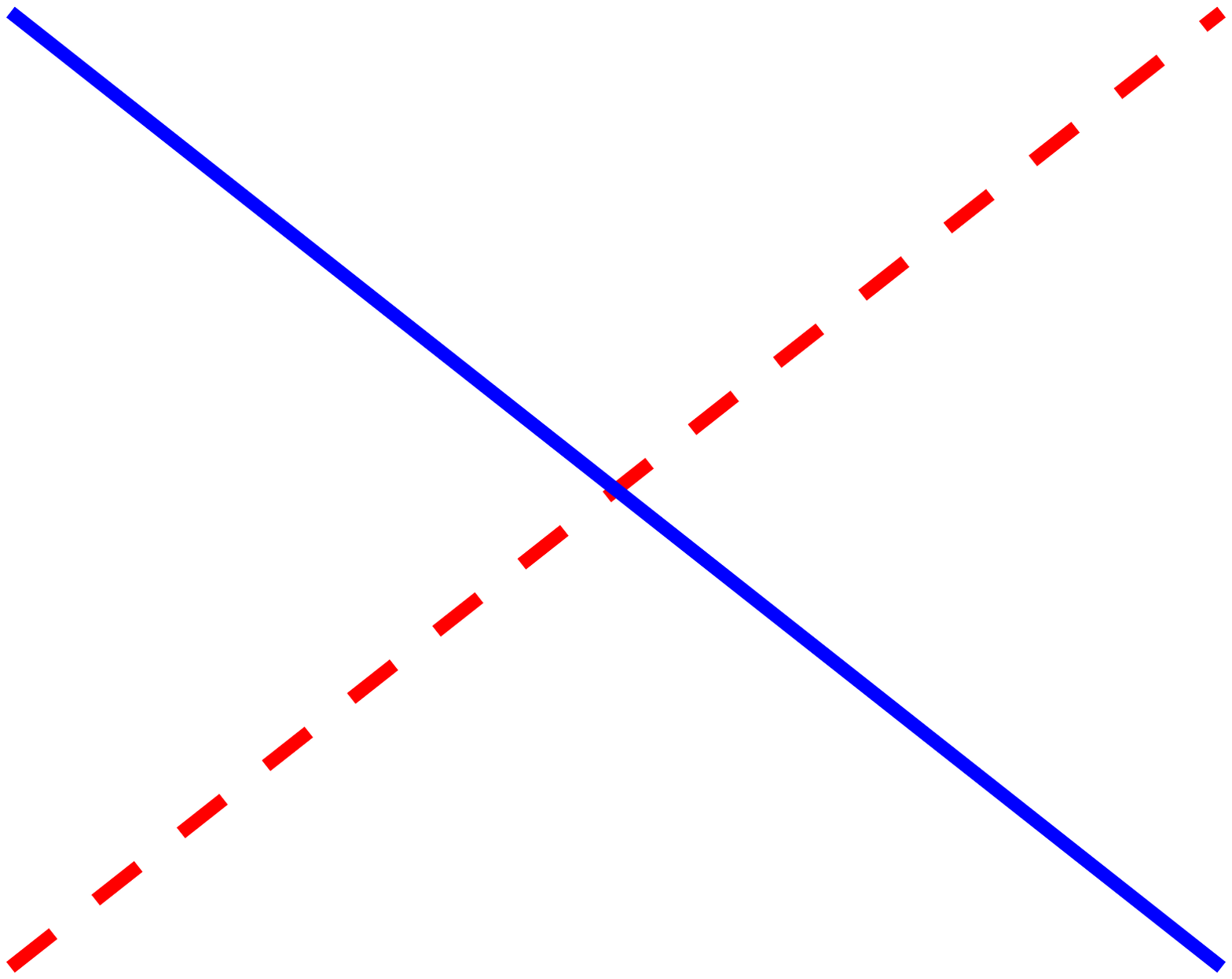}}}
  \end{minipage}
  \begin{minipage}{0.3\textwidth}
  \fbox{\adjustbox{trim={.051\width} {0.2\height} {0.01\width} {0.2\height},clip}
    {  \includegraphics[scale=0.15]{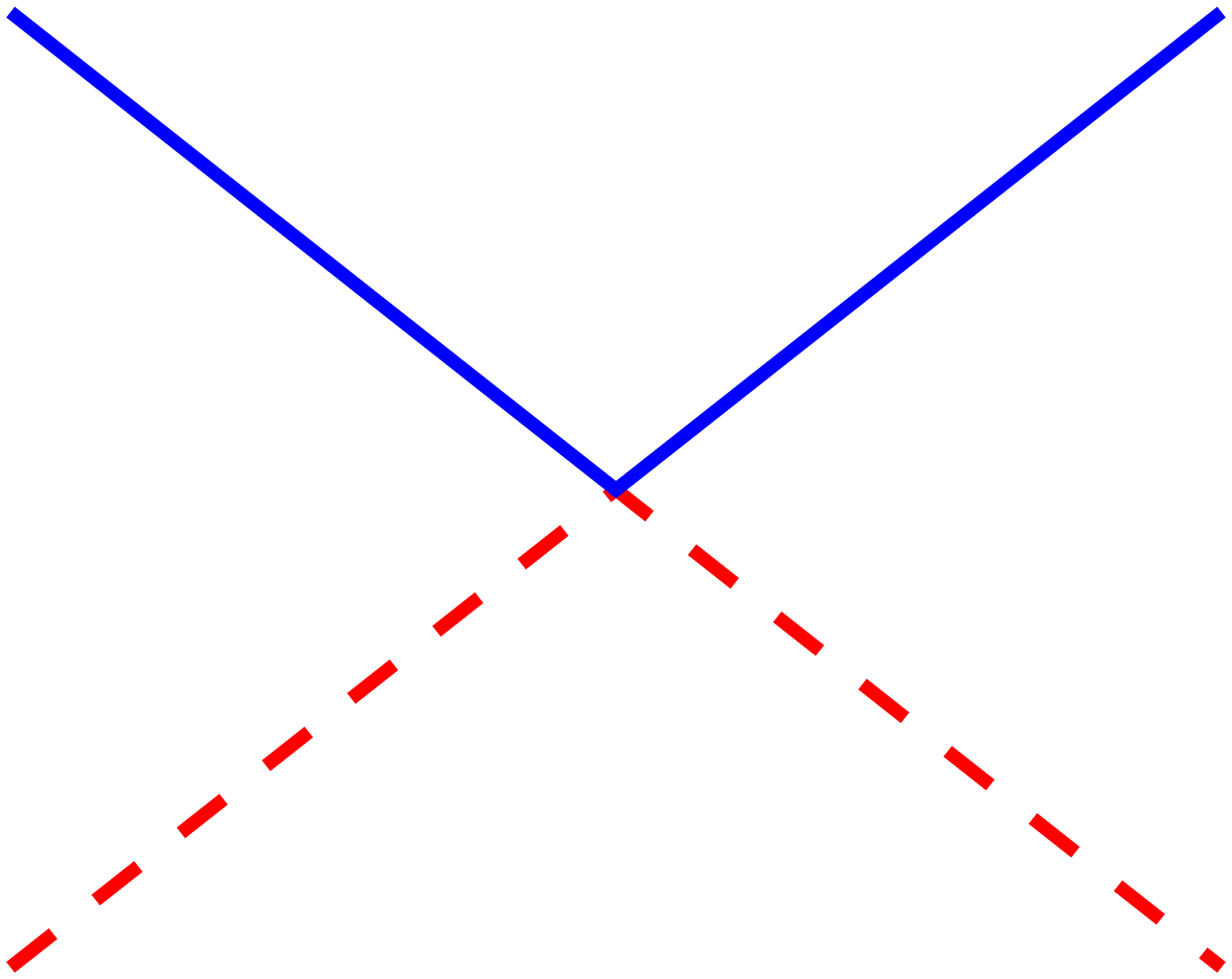}}}
  \end{minipage}
\caption{In the first model, a player who receives the information of the image at the left panel
can not distinguish between two players who crossed
(central panel) or two who touched and bounced (right panel).}\label{caminos}
\end{figure}

\medskip

One of the main concern in dealing with interacting particles system like \eqref{classic} or
\eqref{delayed} is the determination  of its limit as $N\to +\infty$. The treatment of this
question in absence of delay, namely for \eqref{classic}, is classic (see e.g.
\cite{golsedynamics}) and relies on rewriting system \eqref{classic} as a single equation for
the empirical measure $\mu^N(t):=\frac1N \sum_{i=1}^N \delta\{x^{(i)}(t)\}$, where
$\delta\{x^{(i)}(t)\}$ is the Dirac mass at $x^{(i)}(t)$. Notice that $\mu^N(t)$ belongs to
$\Prob(\R^d)$, the set of probability measure on $\R^d$. It is indeed easy to see that
$\mu^N(t)$ is the unique solution to
\begin{equation}\label{KineticEquClassic}
\frac{d}{d t}\mu  + div(\F[\mu]\, \mu)=0,
\end{equation}
with initial condition $\mu^N(0)$. Here, for a given probability measure $\mu$ on $\R^d$,
$\F[\mu]$ is the vector field given by $ \F[\mu](x)=\int_{\R^d} K(x,y)\mu(dy)$.
Well-posedness in $C([0,+\infty),\Prob(\R^d))$ of \eqref{KineticEquClassic} for a wide class
of  initial condition is well-known (see e.g. \cite{golsedynamics} and references therein) 
assuming that $K$ is
bounded Lipschitz. In particular it is known that the solution depends continuously on the
initial condition. Thus if $\mu^N(0)\to \mu(0)$ as $N\to +\infty$,  we have convergence of
the corresponding solutions namely  $\mu^N(t)\to \mu(t)$. In that sense we can consider
\eqref{KineticEquClassic} as the correct equation to treat the interacting population 
whether  the number of agents is finite or not.  
We mention that measure-valued solutions are proving increasingly useful to model various phenomena 
ranging from biology (see e.g. flocking e.g. \cite{CCR}, population dynamic e.g.  
\cite{AS},\cite{CCC},\cite{CCGU}), 
game theory (e.g. \cite{AFMS},\cite{PRCS}), 
sociology (opinion formation process  \cite{PPS},\cite{PPSS}, political polarizaition \cite{PSV},\cite{SPV}) 
to traffic (see e.g. \cite{EBA}).

In this paper  we extend this approach to the  delayed system \eqref{delayed}. To do so we
notice that the solution $\mu(t)$ of \eqref{KineticEquClassic} can also be thought of as the
law of the solution to the ODE $x'(t)=\F[\mu(t)](x(t))$ when the initial condition is chosen
randomly according to the probability distribution $\mu(0)\in \Prob(\R^d)$. In the delayed
setting we are thus led to consider delay differential equation with random initial condition.
We prove the well-posedness of such equations and argue that they are  the correct limit of
the system \eqref{delayed} as $N\to +\infty$. Notice that now the state of the population at
time $t$ is given by a probability measure $\mu(t)$ on $E$ in case (a) or on $\R^d$ in case
(b). In any case
\begin{itemize}
\item[(c)] at any time $t\ge 0$, there exists  a family of probability distributions
    $\mu(t+s)$, $s\in [-\tau,0]$, giving the density of agents in the time-window $[t-\tau,t]$ which is common knowledge for all agents.
\end{itemize}

\begin{remark}
Let us mention briefly that in $(c)$ we are using the notion of {\it common knowledge}
introduced by Aumann, see for instance \cite{koessler2000common}, which implies that
everybody knows $\mu(x,t+s)$, and also   everybody knows that everybody knows that, and
so on. So, everybody will use this family of measures in order to predict the evolution of the
system. Let us note that if a particular agent knows exactly the position of finitely many
agents, this information does not enter in her/his analysis, since it is a set of zero measure
respect to $\mu$.
\end{remark}

\bigskip

As an application of our model, we can consider a population of ants moving on some planar
domain. Usually, they leave traces of pheromones that serve as a guide to other ants, and
the concentration decay with time. This chemical concentration in some ball around an ant
can be easily modeled with a kernel supported in this ball, and  weighting   different the
paths at different times, for instance, using
$$
\int_{-\tau}^0 \sum_{j=1}^N K\left(x^{(i)}(t),x^{(j)}(t+s)\right) \rho(s)ds
$$
with $\rho\to 0$ when $t\to -\tau$, being $\tau$ some characteristic time for the duration of
the pheromones. Let us mention the pioneer work of Fontelos and  Friedman  \cite{ants},
where the behavior of ants was modeled using
a system of two partial differential equations, one governing the distribution of ants, and the other
one devoted to the generation and decay of the pheromones, see
also \cite{ants2}. Since new pheromones appear in the path each ant travelled, it is enough to know their
paths and weighting them with a function of $t$ in order to model the decay of the chemical trail.  We will
study this problem in a separate work.

\medskip

The paper is organized as follow. We first recall some preliminary results about probability
measures and differential equations, with and without delay. We then study  interacting
particles system like \eqref{delayed} and its limit as $N\to +\infty$ in the form of delayed
differential equations with random initial condition. The main result of this paper is the
well-posedness of such equation. We go on examining in details the case of a kernel like
\eqref{LossMemoryKernel}. We conclude by examining some possible direction for future
research. The proofs are given at the very end of the paper for an easy flow.

\section{Preliminaries}\label{preliminares}

In this section we establish some notations and recall known results  that we use throughout this article.

Let us fix some constants  $\tau>0$, the maximum value of the delay. 
We denote $x^{(i)}_t$ the path in the time-window $[t-\tau,t]$ of a an agent $i$. 
It is an element of the space
\begin{equation}\label{espacioC}
  E:= C([-\tau,0],\Rd). 
\end{equation}
with $x^{(i)}_t(s):=x^{(i)}(t-s)$, $s\in [-\tau,0]$. 
We will usually denote $\sigma$ a generic element of $E$. 
We endow $E$ with the usual sup-norm $\|\sigma\|_\infty = \max_{s\in [-\tau,0]} |\sigma(s)|$, $\sigma\in E$.

\subsection{Preliminaries on probability measures}

Let $(X,d)$ be a metric space.
We denote $\Prob(X)$ the set of Borel probability measures on $(X,d)$.
For instance $\delta\{x\}$ is the Dirac measure at the point $x\in X$ which is defined
for any $A\subset X$ Borel by $\delta\{x\}(A) = 1$ if $x\in A$, and $\delta\{x\}(A)=0$ if $x\notin  A$.
Notice in particular that
\begin{equation}\label{deltadedirac}
  \int_X\varphi(y)\, d \delta\{x\} (y)=  \varphi(x)
\end{equation}
for any continuous function $\varphi:X\to \R$.

A measure $\mu\in \Prob(X)$ has finite first moment if
$\int_X d(x,x_0)\,d\mu(x)<\infty$ for some (hence for any) $x_0\in X$.
We denote $\Prob_1(X)$ the set of such probability measures.

As a matter of notation we will indifferently denote $\int_X f(x)\,d\mu(x)$ or $\int
f(x)\mu(dx)$.

Given another measure space $Y$ with some $\sigma$-algebra $\Sigma_Y$,
the push forward of a measure $\mu$ on $X$ by a measurable
function $f:X\to Y$ is the measure $f\#\mu$ on $Y$ defined by
\begin{equation}\label{pushforward}
  f\#\mu (A):=\mu(f^{-1}(A))
\end{equation}
for each $A\in \Sigma_Y$. This is equivalent to saying that
$$ \int_Y \phi \,d(f\#\mu) = \int_X \phi\circ f\,d\mu $$
for any bounded measurable $\phi:Y\to \R$.

On $\Prob_1(X)$ we consider the Monge-Kantorovich distance $W_1$ defined by
\begin{equation}\label{DefW1}
 W_1\left(\mu,\tilde\mu\right):=
\sup \left\{ \int_X\varphi\,d\left(\mu-\tilde\mu\right):\, \varphi:X\to \R, \text{and } Lip(\varphi)\leq 1\right\}
\end{equation}
for $\mu,\tilde \mu\in \Prob_1(X)$.
 Here  $Lip(\varphi)$ denotes  the Lipschitz constant of $\varphi$ i. e. the least constant
$C>0$ such that $|\phi(x)-\phi(y)|\le Cd(x,y)$ for any $x,y\in X$. It is known that if $(X,d)$ is
complete (resp. Polish, compact) then so is $(\Prob_1(X),W_1)$. Moreover given
$\mu_k,\mu\in \Prob_1(X)$, the convergence $W_1(\mu_k,\mu)\stackrel{k\to
+\infty}{\longrightarrow}0$ is equivalent to the convergence $\int \phi\,d\mu_k\to \int
\phi\,d\mu$ for any continuous function $\phi:X\to\R$ with at most linear growth (i. e.
$|\phi(x)|\le C(1+|x|)$ for some $C>0$). We refer e. g. to \cite{villanioptimal} for the proof of
these statements and more details on Monge-Kantorovich distance.

When $X=\R^d$,  we will need to consider space
\begin{equation}\label{espacioE}
  \E:=C\left([-\tau,0],\mathbb{P}_1(\Rd)\right)
\end{equation}
that we endow with the  sup distance $\mathcal{W}_1$ defined by
\begin{equation}\label{normaM}
  \mathcal{W}_1(\mu,\nu):=\max_{s\in[-\tau,0]}W_1\left(\mu(s),\nu(s)\right)
\qquad \mu,\nu\in \E.
\end{equation}
Notice that if $(X,d)$ is complete, so that $(\mathbb{P}_1(\Rd),W_1)$ is complete,
then $(\E,\mathcal{W}_1)$ is complete.
Eventually given some $\mu\in C\left([-\tau,T],\mathbb{P}_1(\Rd)\right)$
and $t\in[0,T]$ we let $\mu_t\in \E$ be defined as $\mu_t (s):=\mu( t+s)$, $s\in [-\tau ,0]$.

\subsection{Preliminaries on Ordinary Differential Equations}

Consider a  vector field  $\F:(x,t)\in\Rd\times [0,T]\to\F(x,t)\in\Rd$ that we suppose
continuous in $(x,t)$ and also globally  Lipschitz in $x$ uniformly in $t$. Then it is well-known
that for any $x\in \R^d$ and $s\in \R$, the differential equation
\begin{equation}\label{EDO}
 y'(t)=\F(y(t),t) \qquad t\in \R
\end{equation}
with initial condition $y(s)=x$, has a unique global solution that we denote  $\T(s,t,x)$.
We then have the flow property $\T(s,t+t',x)= \T(t,t+t',\T(s,t,x))$.
Eventually for any $s,t\in\R$,
$\T(s,t,\cdot):\R^d\to \R^d$ is a $C^1$-diffeomorphism with inverse $\T(t,s,\cdot)$.
For simplicity we will denote $\T(t,\cdot):=\T(0,t,\cdot)$, $t\in\R$.

Take $s=0$ and suppose now that the initial condition $x$ in \eqref{EDO}  is chosen at random following some probability measure $\mu(0)\in \Prob(\R^d)$.
Then the solution $y$ to \eqref{EDO} is a random vector whose distribution $\mu(t)$ at time $t$ is the probability measure $\mu(t):=\T(t,\cdot)\sharp\mu(0)$.
It is well-known that for any $T>0$, $\mu$ is the unique solution in $C([0,T],\Prob(\R^d))$
to the first order equation
\begin{equation}\label{Transport}
 \p_t\mu + \text{div}(\F\mu)=0
\end{equation}
in the sense that
\begin{equation}\label{DefTransport}
 \int_{\R^d} \phi\,d\mu(t) = \int_{\R^d} \phi\,d\mu(0)
 + \int_0^t\int_{\R^d} \F(x,t)\nabla\phi(x)\,\mu(s)(dx)ds
 \end{equation}
for any $\phi\in C^1_c(\R^d)$.
 We refer e.g. to \cite{villanioptimal}. Moreover, given
two initial condition $\mu(0),\tilde\mu(0)\in \Prob_1(\R^d)$, the corresponding solutions
$\mu(t),\tilde\mu(t)$ to \eqref{Transport} satisfy
\begin{equation}\label{Continuity}
W_1(\mu(t),\tilde\mu(t))\le e^{Lt} W_1(\mu(0),\tilde\mu(0)) \qquad t\in\R.
\end{equation}
where $L>0$ is such that $|\F(x,t)-\F(y,t)|\le L|x-y|$, $x,y\in\R^d$, $t\ge 0$.
This follows easily from \eqref{DefW1} and $\mu(t):=\T(t,\cdot)\sharp\mu(0)$,
$\tilde\mu(t):=\T(t,\cdot)\sharp\tilde\mu(0)$.

\medskip

To end this section we recall the classical  Gr\"onwall's inequality:

\begin{lem}[Gr\"onwall's inequality]\label{Gronwall1}
Let be $u,a,b: [a,b]\to [0,\infty)$  continuous and such that
$$u(t)\le a(t)+b(t)\int^t_0 u(h)\,dh \qquad \text{for any $t\ge 0$.}$$
Then
$$ u(t)\le a(t)+b(t)\int_0^t a(h) e^{\int_h^t b}\,dh
\qquad \text{for any $t\ge 0$.}$$
\end{lem}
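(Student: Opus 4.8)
The plan is to reduce the integral inequality to a differential inequality for a suitable auxiliary function and then integrate. Set $v(t):=\int_0^t u(h)\,dh$, so that $v$ is $C^1$ with $v'(t)=u(t)$ and $v(0)=0$. The hypothesis reads $v'(t)\le a(t)+b(t)v(t)$, i.e. $v'(t)-b(t)v(t)\le a(t)$. The standard trick is to multiply by the integrating factor $e^{-\int_0^t b}$, which is positive, so that the inequality is preserved: $\frac{d}{dt}\bigl(v(t)e^{-\int_0^t b}\bigr)\le a(t)e^{-\int_0^t b}$. Integrating from $0$ to $t$ and using $v(0)=0$ gives $v(t)e^{-\int_0^t b}\le \int_0^t a(h)e^{-\int_0^h b}\,dh$, hence $v(t)\le \int_0^t a(h)e^{\int_h^t b}\,dh$.

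Plugging this bound on $v(t)=\int_0^t u(h)\,dh$ back into the original hypothesis $u(t)\le a(t)+b(t)v(t)$ yields immediately
\[
u(t)\le a(t)+b(t)\int_0^t a(h)e^{\int_h^t b}\,dh,
\]
which is exactly the claimed conclusion.

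I do not expect a serious obstacle here; the only points requiring a little care are purely bookkeeping. First, one should note that the integrating factor $e^{-\int_0^t b}>0$ is what allows multiplying through an inequality without reversing it (this uses $b\ge 0$, which is part of the hypothesis that $b$ maps into $[0,\infty)$, though in fact only positivity of the exponential is needed). Second, one must check that $t\mapsto v(t)e^{-\int_0^t b}$ is genuinely differentiable with the asserted derivative, which follows from $u$ and $b$ being continuous. A minor remark is that the statement as written has a typographical quirk — the functions are declared on $[a,b]$ while the variable name $b$ is reused for one of them — but this does not affect the argument, which only ever uses continuity on an interval containing $0$ and the pointwise inequality for $t\ge 0$.
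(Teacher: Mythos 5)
Your proof is correct, and it is the standard integrating-factor argument; the paper itself does not prove this lemma (it is recalled as a classical fact in the preliminaries), so there is nothing to compare against. One small remark on your bookkeeping: you attribute the need for $b\ge 0$ to the integrating-factor step, where, as you note, only positivity of $e^{-\int_0^t b}$ is actually required. The place where the sign hypothesis on $b$ genuinely matters is the final substitution: to pass from $v(t)\le \int_0^t a(h)e^{\int_h^t b}\,dh$ to $b(t)v(t)\le b(t)\int_0^t a(h)e^{\int_h^t b}\,dh$ you must multiply the inequality by $b(t)$, and this preserves the direction only because $b(t)\ge 0$. With that attribution corrected, the argument is complete.
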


\subsection{Preliminaries on Delay Differential Equations}

A Delay Differential Equation (DDE) is an equation of the form
\begin{equation}\label{DDE}
 x'(t)=F(t,x_t)
\end{equation}
where $x$ takes in values in $\R^p$ for a given $p$, $x_t\in C([-\tau,0],\R^p)$ is defined as before by $x_t(s):=x(t+s)$, and
$F:(t,\sigma)\in \R\times C([-\tau,0],\R^p)\to F(t,\sigma)\in\R^p$.
Thus the evolution of $x$ at time $t$ depends on the past history of $x$ in $[t-\tau,t]$.
For instance if $F(t,\sigma)=\tilde F(t,\sigma(0))$ for some $\tilde F:\R\times \R^p\to\R$,
then the DDE \eqref{DDE} becomes the ODE $x'(t)=\tilde F(t,x(t))$.
In general given a probability measure $\rho\in \Prob([-\tau,0])$, we can consider
 $F(\sigma)=\int_{-\tau}^0 \tilde F(s,\sigma(s))\,d\rho(s)$.
In that case the DDE \eqref{DDE} becomes
$$ x'(t) = \int_{-\tau}^0 \tilde F(s,x(t+s))\,d\rho(s) $$
so that the evolution of $x$ at time $t$ depends on the previous states $x(t+s)$ weighted by
$\rho$. We recover the ODE $x'(t)=\tilde F(x(t),t)$ taking $\rho=\delta\{0\}$.

To start the evolution from $t=0$, we must  prescribe the values of $x(s)$,
$s\in [-\tau,0]$. Thus the DDE must be complemented with an initial condition of the form
$$ x(s)=\phi(s),\qquad s\in [-\tau,0] $$
i. e. $x_0=\phi$ in $[-\tau,0]$, for some function $\phi\in C([-\tau,0],\R^N)$.

We refer to the classical book \cite{hale} and also to the more recent book
\cite{smith2011introduction} for a thorough treatment of DDE. We quote in particular the
following  existence and uniqueness result from \cite{smith2011introduction}, see also
section 2.3 in \cite{hale}:

\begin{theorem}\label{smith}[Thm. 3.7, p.32]
Suppose that $F:\in C([-\tau,0],\R^p)\times \R\to  \R^p$ is continuous and satisfies the
following local Lipschitz condition: for all $a,\,b\in\R$ and $M>0$, there exists $L>0$ such
that:
$$\left|F(t,\sigma)-F(t,\tilde\sigma)\right|\leq L\|\sigma-\tilde\sigma\| $$
for any $t\in [a,b]$ and $\sigma,\tilde\sigma\in C([-\tau,0],\R^p)$ such that
 $\|\sigma\|,\|\tilde\sigma\|\leq M$.

Then for any $M>0$, there exists $\delta>0$ depending only on $M$, such that
for any initial condition $\phi\in C([-\tau,0],\R^p)$ such that  $\|\phi\|\leq M$,
there exists a unique solution $x$ to the DDE defined up to time $\delta$:
\begin{equation*}
\left\{
\begin{array}{rl}
  x'(t) &=F(x_t,t) \qquad 0\le  t< \delta, \\
  x_0&=\phi.
\end{array}
\right.
\end{equation*}
\end{theorem}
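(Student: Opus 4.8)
The plan is to recast the initial value problem as an integral (Picard) fixed-point equation and apply the Banach fixed-point theorem on a suitably chosen complete metric space, taking care that every constant is controlled in terms of $M$ alone.

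\textbf{Step 1 (integral reformulation).} A continuous $x:[-\tau,\delta]\to\R^p$ with $x_0=\phi$ solves the DDE on $[0,\delta)$ if and only if it is a fixed point of the operator
\[
(\Phi x)(t)=
\begin{cases}
\phi(t), & t\in[-\tau,0],\\[2pt]
\phi(0)+\displaystyle\int_0^t F(x_s,s)\,ds, & t\in[0,\delta].
\end{cases}
\]
I would work on the set $\mathcal{X}_\delta$ of continuous $x:[-\tau,\delta]\to\R^p$ with $x_0=\phi$ and $\sup_{t\in[0,\delta]}\|x_t\|\le M+1$; this is a nonempty closed subset of the Banach space $C([-\tau,\delta],\R^p)$ equipped with the sup-norm, hence a complete metric space (the constant-after-$0$ extension of $\phi$ lies in it).

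\textbf{Step 2 (a priori bound on $F$).} Fix once and for all a time interval, say $[a,b]=[0,1]$, and let $L=L(M+1)$ be the Lipschitz constant furnished by the hypothesis for this interval and threshold $M+1$. Then for $t\in[0,1]$ and $\|\sigma\|\le M+1$,
\[
|F(t,\sigma)|\le |F(t,0)|+L\|\sigma\|\le C_0+L(M+1)=:K,
\qquad C_0:=\max_{t\in[0,1]}|F(t,0)|<\infty,
\]
the finiteness of $C_0$ coming from continuity of $F$. Crucially both $L$ and $K$ depend only on $M$.

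\textbf{Step 3 (self-map, contraction, conclusion).} For $x\in\mathcal{X}_\delta$ and $t\in[0,\delta]$ one gets $|(\Phi x)(t)-\phi(0)|\le K\delta$, hence $\|(\Phi x)_t\|\le\|\phi\|+K\delta\le M+K\delta\le M+1$ provided $K\delta\le 1$, so $\Phi$ maps $\mathcal{X}_\delta$ into itself. For $x,\tilde x\in\mathcal{X}_\delta$, using the Lipschitz bound,
\[
|(\Phi x)(t)-(\Phi\tilde x)(t)|\le\int_0^t|F(x_s,s)-F(\tilde x_s,s)|\,ds\le L\int_0^t\|x_s-\tilde x_s\|\,ds\le L\delta\,\|x-\tilde x\|_{C([-\tau,\delta],\R^p)},
\]
with the inequality trivial on $[-\tau,0]$. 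Choosing $\delta:=\min\{1,\tfrac1{2K},\tfrac1{2L}\}$, which depends only on $M$, makes $\Phi$ a contraction of $\mathcal{X}_\delta$ into itself, and the Banach fixed-point theorem yields a unique fixed point $x\in\mathcal{X}_\delta$, i.e. a unique solution of the DDE lying in $\mathcal{X}_\delta$. To upgrade this to uniqueness among \emph{all} solutions, note that any solution $y$ has $t\mapsto\|y_t\|$ continuous with value $\|\phi\|\le M$ at $t=0$, hence $\|y_t\|\le M+1$ on a (possibly smaller) interval where it must coincide with $x$ by the contraction estimate — alternatively by a direct Gr\"onwall argument on $|x(t)-y(t)|$ via Lemma \ref{Gronwall1} — and then on the whole common interval by a standard continuation argument.

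\textbf{Main obstacle.} The only genuinely delicate point is the requirement that $\delta$ depend on $M$ only, not on the particular $\phi$ with $\|\phi\|\le M$. This is exactly what the \emph{uniform} (over bounded balls, over compact time intervals) Lipschitz hypothesis provides: through the estimate $|F(t,\sigma)|\le C_0+L\|\sigma\|$ of Step 2, the constants $K$ and $L$ that govern both the self-mapping property and the contraction property are the same for every admissible initial datum, so a single $\delta=\delta(M)$ works uniformly.
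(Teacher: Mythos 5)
The paper does not prove this statement: it is imported verbatim from \cite{smith2011introduction} (Theorem 3.7) with a pointer to Section 2.3 of \cite{hale}, so there is no in-paper proof to compare against. Your argument is correct and is essentially the standard contraction-mapping proof of that cited result: the integral reformulation, the complete set $\mathcal{X}_\delta$, the bound $|F(t,\sigma)|\le C_0+L\|\sigma\|$ making $\delta$ depend on $M$ alone, and the Banach fixed point are all sound, and you correctly identify the uniformity of $\delta$ in $M$ as the only delicate point. The only part left as a sketch is the upgrade from uniqueness in $\mathcal{X}_\delta$ to uniqueness among all solutions via continuation, but the Gr\"onwall/continuation argument you indicate is standard and closes that gap.
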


\begin{rem}\label{Global}
It follows from Remark 3.8 in \cite{smith2011introduction} that in the setting of Theorem \ref{smith},
if $F$ satisfies a global Lipschitz condition instead of a local one, that is,
 if $L$ can be chosen independent of $a$, $b$, $M$, then the solution $x$ exists for all $t\geq 0$.
\end{rem}

\medskip

We will see in the next section how the study of interacting particles systems with delay naturally leads to consider
 DDE with random initial condition
in the same spirit that \eqref{Transport} stems from considering the ODE \eqref{EDO}
where the initial condition is chosen at random following some probability distribution $\mu(0)$.

\section{Interacting particles system with delay.} \label{memoriaimperfecta}

As explained in the introduction this paper is devoted to the study of a interacting particles
system with delay of the form \eqref{delayed}, namely
\begin{equation}\label{MainEqDelay}
\frac{d}{dt} x^{(i)}(t)
= \frac{1}{N}\sum_{j=1}^N K\left(x^{(i)}(t),x_t^{(j)}\right),\qquad i=1,\ldots,N,
\end{equation}
where the interaction kernel is  $K:\R^d \times E\to \R^d$ with $E=C([-\tau,0],\R^d)$.

We assume that
\begin{enumerate}
\item[(H)] $K:(x,\sigma)\in\R^d \times E\to K(x,\sigma)\in\R^d$ is globally Lipschitz: there
    exists $L>0$ such that for any $x,\tilde x\in\R^d$ and any $\sigma,\tilde\sigma\in E$,
$$ |K(x,\sigma) - K(\tilde x,\tilde\sigma)|\le L(|x-\tilde x|+|\sigma-\tilde\sigma|). $$

\end{enumerate}
A direct application of Theorem \ref{smith} and Remark \ref{Global} shows that

\begin{theorem}
If $K$ satisfies asssumption (H) then for any $\sigma_{in}^1,\ldots,\sigma_{in}^N\in E$,
there exists a unique solution to \eqref{MainEqDelay} with initial condition
$x^i_0=\sigma_{in}^i$, $i=1,\ldots,N$.
\end{theorem}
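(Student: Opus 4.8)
The plan is to recast the coupled system \eqref{MainEqDelay} as a single delay differential equation for a vector in $\R^{Nd}$ and then invoke Theorem \ref{smith} together with Remark \ref{Global}. Write $X(t):=(x^{(1)}(t),\dots,x^{(N)}(t))\in\R^{Nd}$, and identify $C([-\tau,0],\R^{Nd})$ with $E^N$, so that a generic element is $\Sigma=(\sigma^1,\dots,\sigma^N)$ with $\sigma^i\in E$. Define $F:\R\times C([-\tau,0],\R^{Nd})\to\R^{Nd}$ componentwise by
\[
  F(t,\Sigma)^{(i)} := \frac1N\sum_{j=1}^N K\big(\sigma^i(0),\sigma^j\big),\qquad i=1,\dots,N,
\]
using the convention $x_t(s)=x(t+s)$ of Theorem \ref{smith} (the alternative convention $x_t(s)=x(t-s)$ used in Section \ref{preliminares} amounts to precomposing with the isometric reflection $s\mapsto -s$ of $E$, which affects nothing below; also note $F$ does not in fact depend on $t$). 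With this notation, \eqref{MainEqDelay} is exactly the DDE $X'(t)=F(t,X_t)$, and the initial conditions $x^i_0=\sigma^i_{in}$ become $X_0=\Sigma_{in}:=(\sigma^1_{in},\dots,\sigma^N_{in})\in C([-\tau,0],\R^{Nd})$.

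Next I would verify the hypotheses of Theorem \ref{smith} for this $F$. Continuity of $F$ in $(t,\Sigma)$ follows from the continuity of $K$ (assumption (H)) and of the evaluation map $E\ni\sigma\mapsto\sigma(0)\in\R^d$. For the Lipschitz bound, equip $C([-\tau,0],\R^{Nd})$ with the sup-norm $\|\Sigma\|:=\max_{s\in[-\tau,0]}|\Sigma(s)|$, and fix $\Sigma=(\sigma^1,\dots,\sigma^N)$, $\tilde\Sigma=(\tilde\sigma^1,\dots,\tilde\sigma^N)$. Since $|\sigma^i(0)-\tilde\sigma^i(0)|\le|\Sigma(0)-\tilde\Sigma(0)|\le\|\Sigma-\tilde\Sigma\|$ and $\|\sigma^j-\tilde\sigma^j\|_\infty\le\|\Sigma-\tilde\Sigma\|$, assumption (H) gives, for each $i$,
\[
  \big|F(t,\Sigma)^{(i)}-F(t,\tilde\Sigma)^{(i)}\big|
  \le \frac1N\sum_{j=1}^N L\Big(|\sigma^i(0)-\tilde\sigma^i(0)|+\|\sigma^j-\tilde\sigma^j\|_\infty\Big)
  \le 2L\,\|\Sigma-\tilde\Sigma\|,
\]
whence $|F(t,\Sigma)-F(t,\tilde\Sigma)|\le 2L\sqrt{N}\,\|\Sigma-\tilde\Sigma\|$. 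Crucially, this Lipschitz constant is independent of $t$ and of any bound $M$ on $\|\Sigma\|,\|\tilde\Sigma\|$, so $F$ satisfies a global Lipschitz condition.

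Finally, Theorem \ref{smith} provides, for each initial datum $\Sigma_{in}$, a unique local solution of $X'(t)=F(t,X_t)$, $X_0=\Sigma_{in}$, and since the Lipschitz constant above is uniform, Remark \ref{Global} upgrades this to a unique solution defined for all $t\ge 0$. Reading off the components $x^{(i)}:=X^{(i)}$ yields the asserted global solution of \eqref{MainEqDelay} with $x^i_0=\sigma^i_{in}$, and uniqueness for the system follows at once from uniqueness for the $\R^{Nd}$-valued DDE. The only point requiring any care is the one already stressed: the Lipschitz estimate for $F$ must be \emph{global} (uniform in time and in the size of the data), as this is exactly what Remark \ref{Global} needs in order to pass from the local existence of Theorem \ref{smith} to a solution on all of $[0,\infty)$; the rest is bookkeeping with the product structure and the sup-norms.
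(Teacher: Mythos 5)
Your proposal is correct and follows essentially the same route as the paper: both recast \eqref{MainEqDelay} as a single DDE in $(\R^d)^N$ with the componentwise right-hand side $\frac1N\sum_j K(\sigma^i(0),\sigma^j)$, check the global Lipschitz property from assumption (H), and conclude via Theorem \ref{smith} and Remark \ref{Global}. You merely spell out the Lipschitz estimate that the paper leaves as ``easily seen.''
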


\begin{proof}
For each $i=1,\ldots,N$, define $F_i:C([-\tau,0],(\R^d)^N)\to \R^d$
by
$$ F_i(\sigma) = \frac{1}{N}\sum_{j=1}^N K(\sigma^i(0),\sigma^j),
\qquad \sigma=(\sigma^1,\ldots,\sigma^N).  $$
Then \eqref{MainEqDelay} is $\frac{d}{dt}\sigma^i(t)=F_i(\sigma^1_t,\ldots,\sigma^N_t)$,
$i=1,\ldots,N$.
It is easily seen using assumption (H) that $F_1,\ldots,F_N$ are globally Lipschitz.
The result then follows from Theorem \ref{smith} and Remark \ref{Global}.
\end{proof}

Denote $\mu^N(t)$ the empirical measure associated to $x^1_t,\ldots,x^N_t$ namely
\begin{equation}\label{empirica}
  \mu^N (t):=\frac{1}{N}\sum_{i=1}^N\delta\left\{x^{(i)}_t\right\},
\end{equation}
where $\delta\left\{x^{(i)}_t\right\}$ is the Dirac mass at the point $x^i_t$ of $E$.
Notice that $\mu^N\in C([0,+\infty),\Prob(E))$.
Independently for any $\mu\in  C([0,+\infty),\Prob(E))$, consider the vector fields
$\F[\mu]:\R\times \R^d\to \R^d$ defined by
\begin{equation}\label{VF}
 \F[\mu](t,x) = \int_E K(x,\sigma)\,  \mu(t)(d\sigma).
\end{equation}
Then \eqref{MainEqDelay} can be written as
\begin{equation}\label{MainEqDelay30}
\frac{d}{dt} x^{(i)}(t) = \F[\mu^N](t,x^{(i)}(t))   \qquad i=1,\ldots,N.
\end{equation}

We can interpret this system of equations thinking that we are solving the equation
$$ \frac{d}{dt} x(t) = \F[\mu^N](t,x(t))  $$
choosing the inital condition in the set $\{x^{(1)}_0,\ldots,x^{(N)}_0\}$ with equiprobability i.
e. following $\mu^N(0)$. Then $\mu^N(t)$ is the distribution of $x_t$. Thus as $N\to +\infty$
the limit $\mu(t):=\lim_{N\to +\infty}\mu^N(t)$, if it exists, should satisfy the equation
\begin{eqnarray}\label{LimitEq}
\frac{d}{dt} X(t) = \F[\mu](t,X(t))
\end{eqnarray}
where $X_t$ is a random variable with values in $E$ whose distribution is $\mu(t)$, and the
initial condition is a given random variable $X_0:\Omega\to E$ (where $\Omega$ is a
underlying probability space) with distribution $\mu(0):=\lim_{N\to +\infty}\mu^N(0)$ (if the
limit exists). This equation is understood in a path-wise sense: for almost any $\omega\in
\Omega$,
\begin{equation}\label{LimitEq2}
\begin{split}
& X(t)(\omega) = X_0(0)(\omega) + \int_0^t\F[\mu](s,X(s)(\omega))\,ds \qquad t\ge 0,\\
& X(t)(\omega) = X_0(t)(\omega) \qquad t\in [-\tau,0].
\end{split}
\end{equation}

We are thus led to consider the random delay differential equation
\begin{equation}\label{MainRDDE}
\left\{
\begin{array}{rl}
  X'(t) &= \F[\mu](t,X(t))=\int_\C K(X(t),\sigma)\,  d\mu(t,\sigma),\, \mu(t) = \mathcal{L}(X_t), \\
  \mu(0) & = \mathcal{L}(X_0),
\end{array}
\right.
\end{equation}
where $\mathcal{L}(X_t)\in Prob(E)$ denotes the distribution of $X_t$.
We  have the following well-posedness result:

\begin{theorem}\label{GeneralThm}
Suppose that $K:\R^d \times E\to \R^d$ satisfies assumption (H) above. For any $R_0>0$
and any initial distribution $\mu(0)\in Prob(E)$  supported in the ball $B_0(R_0)\subset E$,
there exists a unique solution to \eqref{MainRDDE} in a path-wise sense.

Moreover, given two initial condition $\mu_{in},\nu_{in}\in \Prob(E)$ supported in some ball
$B_0(R_0)$, the law $\mu(t)$ and $\nu(t)$ of the corresponding solution satisfy
\begin{equation}\label{ContDep2}
  W_1(\mu( t),\nu(t))\leq  r(t) W_1(\mu_{\inp},\nu_{\inp})
\end{equation}
for some explicit continuous function $r:[0,+\infty)\to [0,+\infty)$ such that $r(0)=1$.
\end{theorem}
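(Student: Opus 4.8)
The plan is to set up a fixed-point argument in the space $\E = C([-\tau,T],\Prob_1(E))$, or rather a closed subset of it consisting of curves of measures supported in a fixed large ball, exactly mirroring the way \eqref{Transport} is derived from \eqref{EDO}. Concretely, given a curve $\mu \in C([-\tau,T],\Prob_1(E))$ extending $\mu(0)$, the vector field $\F[\mu](t,x) = \int_E K(x,\sigma)\,\mu(t)(d\sigma)$ is, by assumption (H), continuous in $(t,x)$ and globally Lipschitz in $x$ with constant $L$ uniformly in $t$; hence by the ODE theory recalled in Section~\ref{preliminares} it generates a flow $\T^\mu(s,t,\cdot)$. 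From this flow I build the path-valued process: starting from the random initial path $X_0$ with law $\mu(0)$, I solve $X'(t) = \F[\mu](t,X(t))$ pathwise, which makes sense because $\F[\mu]$ does not itself depend on the history of $X$ — the delay has been ``frozen'' into the prescribed curve $\mu$. This defines $X_t$ for each $t$, and I set $\Phi(\mu)(t) := \mathcal{L}(X_t) \in \Prob(E)$. A fixed point of $\Phi$ is precisely a solution of \eqref{MainRDDE} in the pathwise sense \eqref{LimitEq2}.

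The next step is the a priori bound that keeps everything in a ball. Since $\mu(0)$ is supported in $B_0(R_0)\subset E$ and $K$ is globally Lipschitz (hence of at most linear growth), a Grönwall estimate on $|X(t)|$ shows that for each $t\in[0,T]$ the solution stays in a ball of radius $R(T)$ depending only on $R_0$, $L$, $T$; consequently $\Phi$ maps the set $\mathcal{X} := \{\mu \in C([-\tau,T],\Prob_1(E)) : \mu(0) \text{ fixed},\ \mathrm{supp}\,\mu(t)\subset B_0(R(T))\ \forall t\}$ into itself. This set is complete for the metric $\mathcal{W}_1(\mu,\nu) = \max_{s}W_1(\mu(s),\nu(s))$ restricted to it. Then I estimate $\mathcal{W}_1(\Phi(\mu),\Phi(\nu))$: writing $X_t,Y_t$ for the respective processes coupled through the same $X_0$, one has $W_1(\Phi(\mu)(t),\Phi(\nu)(t)) \le \mathbb{E}\,\|X_t - Y_t\|_\infty$, and $\|X_t-Y_t\|_\infty = \sup_{s\le t}|X(s)-Y(s)|$. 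Using the two defining integral equations and (H),
$$
|X(s)-Y(s)| \le \int_0^s L|X(r)-Y(r)|\,dr + \int_0^s L\, W_1(\mu(r),\nu(r))\,dr,
$$
because $|\F[\mu](r,z)-\F[\nu](r,z)| = |\int_E K(z,\sigma)\,d(\mu(r)-\nu(r))(\sigma)| \le L\,W_1(\mu(r),\nu(r))$ by the Kantorovich duality \eqref{DefW1} (the map $\sigma\mapsto K(z,\sigma)$ being $L$-Lipschitz). Grönwall's inequality, Lemma~\ref{Gronwall1}, then gives $\sup_{s\le t}|X(s)-Y(s)| \le L e^{Lt}\int_0^t \mathcal{W}_1^t(\mu,\nu)\,dr$ where $\mathcal{W}_1^t$ is the sup-distance up to time $t$; in particular on a short enough interval $[0,\delta]$ the map $\Phi$ is a contraction on $\mathcal{X}$, yielding a unique local solution, and the a priori ball bound lets me iterate on $[\delta,2\delta],\ldots$ to cover all of $[0,T]$, hence all of $[0,+\infty)$ since $T$ was arbitrary. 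This also proves uniqueness.

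For the continuous-dependence estimate \eqref{ContDep2}, I run the same coupling but now with $X_0$ and $Y_0$ having laws $\mu_{in}$ and $\nu_{in}$ respectively, chosen as an optimal (or near-optimal) coupling so that $\mathbb{E}\,\|X_0-Y_0\|_\infty$ is close to $W_1(\mu_{in},\nu_{in})$; both live in $B_0(R_0)$, so the same uniform ball radius $R(T)$ applies on any finite interval. The integral-equation estimate now carries the extra term $\mathbb{E}\,\|X_0-Y_0\|_\infty \le W_1(\mu_{in},\nu_{in})$, and since the measures themselves satisfy $W_1(\mu(r),\nu(r)) \le \mathbb{E}\,\|X_r - Y_r\|_\infty$, one gets a closed Grönwall inequality for the quantity $g(t) := \mathbb{E}\sup_{s\le t}\|X_s-Y_s\|_\infty$ of the form $g(t) \le W_1(\mu_{in},\nu_{in}) + 2L\int_0^t g(r)\,dr$; Lemma~\ref{Gronwall1} then produces $g(t) \le r(t)\,W_1(\mu_{in},\nu_{in})$ with $r(t) = e^{2Lt}$ (or a similar explicit expression), $r(0)=1$, and $W_1(\mu(t),\nu(t)) \le g(t)$ gives \eqref{ContDep2}.

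The main obstacle I anticipate is not any single estimate but the care needed in the measurability and coupling bookkeeping: one must verify that $\omega \mapsto X_t(\omega)\in E$ is genuinely a random variable (measurable into $E$ with its sup-norm Borel structure), that the map $\mu\mapsto X_t$ built from the flow is well-defined independent of the representative random variable $X_0$ with law $\mu(0)$, and that $W_1$ on $\Prob(E)$ is indeed controlled by $\mathbb{E}\|X_t-Y_t\|_\infty$ for the chosen coupling — this last point is just the trivial direction of Kantorovich duality, but it has to be invoked on the path space $E$ rather than on $\R^d$. Once these foundational points are in place, the contraction/Grönwall machinery is routine and essentially identical to the delay-free case.
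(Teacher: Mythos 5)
Your proposal is correct and follows essentially the same route as the paper: your map $\Phi(\mu)(t)=\mathcal{L}(X_t)$ is exactly the paper's $\Gamma(\mu)(t)=(res[t]\circ ext[\mu])\sharp\mu_{in}$, the invariant ball, the flow estimates, the short-time contraction and the iteration are all the same, and your coupling computation $\mathbb{E}\|X_t-Y_t\|_\infty$ is the paper's integral $\int_E\|res[t]\circ ext[\mu](\sigma)-res[t]\circ ext[\nu](\sigma)\|\,d\mu_{in}(\sigma)$ in probabilistic notation. The only cosmetic difference is in the stability estimate, where you invoke a (near-)optimal coupling of $\mu_{in}$ and $\nu_{in}$ (hence Kantorovich--Rubinstein duality on the Polish space $E$), whereas the paper works directly with $1$-Lipschitz test functions and the $e^{Lt}$-Lipschitz bound on $res[t]\circ ext[\mu]$, avoiding duality altogether.
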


The proof of this result is given in section \ref{SectionProofGeneralThm} below. It consists
first in rewriting the problem as fixed-point equation for curves in $\Prob(E)$, and then
applying the standard Banach fixed-point Theorem. In particular the law $\mu(t)$ of $X_t$ is
shown to be characterized by the fixed-point equation
\begin{equation}\label{FP5}
 (res[t]\circ ext[\mu])\sharp\mu_{in} = \mu(t) \qquad t\ge 0,
\end{equation}
where the restriction operator $res[t]$, $t\ge 0$, is defined by
$$
\begin{array}{rccl}
  res[t]:&C([-\tau, +\infty),\mathbb{R}^d)&\rightarrow & E \\
   &\sigma&\mapsto & \sigma_t,
\end{array}
$$
and  the extension operator $ext[\mu]$ is
\begin{equation}\label{definicionSt}
\begin{array}{rccl}
  ext[\mu]:& E &\rightarrow & C([-\tau,+\infty),\R^d)\\
  &\sigma &\mapsto &\left\{ \begin{array}{ll}
    \sigma(t) &\text{ if } t\leq 0 \\
    \T(t,\sigma(0);\mu)&\text{ if } t> 0,
  \end{array}
  \right.
\end{array}
\end{equation}
being $\T(s,t,x;\mu)$ the flow of te vector field $(t,x)\to \F[\mu](t,x)$.

Coming back to the system \eqref{MainEqDelay}, suppose that the empirical measure
associated to the initial conditions $x^{(i)}_0$, namey $\mu^N(0) = \frac1N \sum_{i=1}^N
\delta\{x^{(i)}_0\}\in Prob(E)$ converges as $N\to +\infty$ to some measure
$\mu(0)\in\Prob(E)$. Denote $\mu(t)$ the solution of the fixed-point equation \eqref{FP5} i.
e. $\mu(t)$ is the law of the unique solution $X(t)$ to \eqref{MainRDDE}. Then according to
\eqref{ContDep2},
$$ W_1(\mu^N(t),\mu(t))\leq  r(t) W_1(\mu^N(0),\mu(0)) \qquad t\ge 0, $$
so that for any $t\ge 0$,
$$ \lim_{N\to +\infty}W_1(\mu^N(t),\mu(t))=0. $$
We can thus consider that \eqref{FP5}, or equivalently \eqref{MainRDDE}, is the correct
equation to describe an arbitrary population of  agents interacting through binary interaction
via the kernel $K$.

\section{Imperfect vs perfect memory}

In our general model an agent at some time $t$  updates his trajectory reacting to the past
history in $[t-\tau,t]$ of all the others individuals. We can thus consider he/she has a perfect
memory of all individual history in the recent past $[t-\tau,t]$. However, it may happen that
for some specific interaction kernel $K$ this complete knowledge is not fully used. Consider
for instance the kernel
\begin{equation}\label{DefKImperfect}
 K(x,\sigma) = \int_{-\tau}^0 \tilde K(x,\sigma(s))\,d\rho(s) \qquad x\in\R^d,\,
\sigma\in E,
\end{equation}
for some probability measure $\rho\in \Prob([-\tau,0])$ and kernel
$\tilde K:\R^d\times \R^d\to \R^d$.
Then the system \eqref{MainEqDelay} of interacting agents becomes
\begin{eqnarray}\label{Eq10}
\frac{d}{dt} x^{(i)}(t)
= \frac1N \sum_j \int_{-\tau}^0 \tilde K(x^{(i)}(t),x^{(j)}(t+s))\,d\rho(s).
\end{eqnarray}

We assume that
\begin{enumerate}
\item[(H')] $\tilde K:\R^d \times \R^d\to \in\R^d$ is globally Lipschitz: there exists $L>0$ such that for any $x,\bar x,y,\bar y\in\R^d$,
$$ |K(x,y) - K(\bar x,\bar y)|\le L(|x-\bar x|+|y-\bar y|). $$
\end{enumerate}
It follows that $K$ satisfies assumption (H)  so that \eqref{Eq10}
has a unique solution for any given initial condition $x^{(1)}_0,\ldots,x^{(N)}_0$.

Introducing
$$ \tilde \mu^N(t) = \frac1N \sum_{j=1}^N \delta\{x^{(j)}(t)\}
\in \Prob(\R^d), \qquad s\in \R, $$
equation \eqref{Eq10} can be rewritten
\begin{equation}\label{MainEqDelayImperfect}
\frac{d}{dt} x^{(i)}(t)
 =  \int_{-\tau}^0 \Big(\int_{\R^d}\tilde K(x^{(i)}(t),y)\,\tilde\mu^N(t+s)(dy)\Big)\,d\rho(s)
\qquad i=1,\ldots,N.
\end{equation}
The measure $\tilde \mu^N(s)$ gives the distribution at time $s$ of the agents in $\R^d$.
Thus the time evolution of each $x^{(i)}(t)$ depends only on  this distribution at time $t+s$,
$s\in [-\tau,0]$, and not on the full knowledge of  individual trajectories. In that sense we
can say that agents have imperfect memory of the past: they only need to know the past
distribution of the whole population and not the precise trajectory of each individual. So it
seems that in this case the natural state space is $\Prob(\R^d)$.

For any $\tilde \mu\in C([-\tau,+\infty],\Prob(\R^d))$, let us introduce the vector field
\begin{equation}\label{VF3}
 \tilde\F[\tilde \mu](t,x) :=
\int_{-\tau}^0 \Big(\int_{\R^d}\tilde K(x,y) \,\tilde\mu(t+s)(dy)\Big)\,d\rho(s)
\qquad t\ge 0,\,x\in\R^d.
\end{equation}
Thus,
$$ \frac{d}{dt} x^{(i)}(t) = \tilde\F[\tilde \mu^N](x^{(i)}(t)) \qquad i=1,\ldots,N,\quad t\ge 0. $$

Given some test function   $\varphi\in C^1_c(\Rd)$, taking the time derivative of
$\int \phi\,d\mu^N(t)=\frac1N \sum_{i=1}^N \varphi(x^{(i)}(t))$, we obtain
\begin{align*}
\frac{d}{dt} \int_{\Rd} \varphi(x)\,d\mu^{N}(x,t)
& =\frac{1}{N}\sum_{i=1}^N  \nabla \varphi(x^{(i)}(t))  \cdot \tilde\F\left[\mu^N\right](x^{(i)}(t)) \\
&= \int_{\Rd} \nabla \varphi(x)  \cdot \tilde\F\left[\mu^N\right](x)\,\mu^{N}(t)(dx).
\end{align*}
Thus $\mu^N$ is a solution, in the sense of definition \eqref{DefTransport}, of the equation
\begin{equation}\label{MFEqu}
 \frac{d}{dt} \mu   + \text{div}\left( \tilde\F[\mu]\, \mu  \right) = 0,
\end{equation}
with initial condition  $\mu_0=\tilde \mu^N_0$ i. e. $\mu(s)=\tilde \mu^N(s)$ for $s\in
[-\tau,0]$.

The next result states this equation is well-posed for arbitrary initial condition:

\begin{theorem} \label{TeoExist}
For any $R_0>0$ and any initial condition  $\mu_{in}\in\E$ such that
$\mu_{in}(t)$ is suported in the ball $B_0(R_0)\subset \R^d$ for any $t\in [-\tau,0]$,
there exists a unique solution
$\mu\in  C\left([-\tau,\infty),\mathbb{P}_1\left(\Rd\right)\right)$
to \eqref{MFEqu} with intial condition $\mu_0=\mu_{in}$.
Moreover $\mu(t)$ has compact support for any $t\ge 0$.

Eventually, the solution  depends  continuously  on  the initial data: there exists a continuous increasing function
$r:[0,\infty)\to [1,\infty)$ with  $r(0)=1$ such that for any compactly supported
initial conditions  $\mu_{in}$ and $\nu_{in}$, the corresponding solutions $\mu(t)$ and $\nu(t)$ of \eqref{MFEqu} satisfy
\begin{equation}\label{cotacontinuidad}
\mathcal{W}_1\left(\mu_t, \nu_t\right) \le
r(t) \mathcal{W}_1(\mu_{in},\nu_{in}),
\end{equation}
where the distance $\mathcal{W}$ is defined in \eqref{normaM}.
\end{theorem}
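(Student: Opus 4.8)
\emph{Strategy.} The plan is to recast \eqref{MFEqu} as a fixed-point equation for measure-valued curves, exactly as the linear transport equation \eqref{Transport} is solved by pushing forward the initial law along the flow of its driving vector field, and then to invoke the Banach fixed-point theorem. Fix $T>0$ and let $\mathcal{X}_T$ be the set of curves $\mu\in C([-\tau,T],\Prob_1(\Rd))$ with $\mu(t)=\mu_{in}(t)$ for all $t\in[-\tau,0]$; since $(\Prob_1(\Rd),W_1)$ is complete, $\mathcal{X}_T$ is a complete metric space for the distance $\sup_{t\in[-\tau,T]}W_1(\cdot(t),\cdot(t))$. Given $\mu\in\mathcal{X}_T$, the vector field $(t,x)\mapsto\tilde\F[\mu](t,x)$ of \eqref{VF3} is well-defined (each $\mu(t+s)$ has finite first moment while $\tilde K$, being globally Lipschitz, grows at most linearly), continuous in $(t,x)$ (using the characterization of $W_1$-convergence through integrals of linearly growing functions and dominated convergence in $d\rho(s)$), and globally $L$-Lipschitz in $x$ uniformly in $t$; hence it generates a flow $\T(s,t,x;\mu)$. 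Define $\Phi(\mu)\in C([-\tau,T],\Prob_1(\Rd))$ by $\Phi(\mu)(t)=\mu_{in}(t)$ for $t\le 0$ and $\Phi(\mu)(t)=\T(0,t,\cdot\,;\mu)\sharp\mu_{in}(0)$ for $t\in[0,T]$ (continuity at $t=0$ is clear because $\T(0,0,\cdot\,;\mu)=\mathrm{id}$ and $\mu_{in}$ is continuous). Once $\mu$ is frozen, \eqref{MFEqu} is precisely the linear transport equation with vector field $\tilde\F[\mu]$, whose unique solution in the sense of \eqref{DefTransport} is this pushforward; therefore $\mu$ solves \eqref{MFEqu} with $\mu_0=\mu_{in}$ if and only if $\mu=\Phi(\mu)$.

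\emph{A priori bound and contraction.} The linear growth of $\tilde K$ gives $|\tilde\F[\mu](t,x)|\le C_0+L|x|+L\sup_{r\in[t-\tau,t]}\int|y|\,d\mu(r)(y)$, so when $\mathrm{supp}\,\mu_{in}(t)\subset B_0(R_0)$ for all $t\in[-\tau,0]$ a delayed Grönwall estimate along the flow bounds $|\T(0,t,z;\mu)|$, for $|z|\le R_0$, by an explicit radius $R(t)$; thus $\Phi$ maps the closed subset of $\mathcal{X}_T$ of curves with $\mathrm{supp}\,\mu(t)\subset\overline{B_0(R(t))}$ for all $t$ into itself, and in particular every fixed point has compact support for all $t\ge 0$. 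For $\mu,\nu\in\mathcal{X}_T$, the duality \eqref{DefW1} and $\mathrm{Lip}_y\tilde K(x,\cdot)\le L$ give $|\tilde\F[\mu](t,x)-\tilde\F[\nu](t,x)|\le L\sup_{r\in[t-\tau,t]}W_1(\mu(r),\nu(r))$; comparing the flows of $\tilde\F[\mu]$ and $\tilde\F[\nu]$ (Grönwall, using that one of them is $L$-Lipschitz in $x$) and coupling $\Phi(\mu)(t)$ and $\Phi(\nu)(t)$ by $(\T(0,t,\cdot\,;\mu),\T(0,t,\cdot\,;\nu))\sharp\mu_{in}(0)$ yields
\[
W_1(\Phi(\mu)(t),\Phi(\nu)(t))\le Le^{LT}\int_0^t \sup_{r\in[0,s]}W_1(\mu(r),\nu(r))\,ds ,
\]
the supremum being bounded over $[0,s]$ because $\mu\equiv\nu\equiv\mu_{in}$ on $[-\tau,0]$. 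Iterating this inequality $n$ times makes $\Phi^n$ a contraction with constant $(Le^{LT})^n T^n/n!$, so $\Phi$ has a unique fixed point in $\mathcal{X}_T$; since $T$ is arbitrary, we obtain a unique solution on $[-\tau,\infty)$. Uniqueness among all solutions is immediate, since every solution is a fixed point of $\Phi$ and the displayed inequality, read with $a\equiv 0$ in Lemma \ref{Gronwall1}, forces two solutions to agree.

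\emph{Continuous dependence.} For the solutions $\mu,\nu$ issued from compactly supported $\mu_{in},\nu_{in}$, set $\delta(t)=W_1(\mu(t),\nu(t))$ and $\delta_0=\mathcal{W}_1(\mu_{in},\nu_{in})$. Splitting $\T(0,t,z;\mu)-\T(0,t,w;\nu)=[\T(0,t,z;\mu)-\T(0,t,w;\mu)]+[\T(0,t,w;\mu)-\T(0,t,w;\nu)]$ and coupling $\mu_{in}(0),\nu_{in}(0)$ optimally, the estimates above give $W_1(\mu(t),\nu(t))\le e^{Lt}\delta_0+Le^{Lt}\int_0^t\sup_{r\in[-\tau,s]}\delta(r)\,ds$, whence $\Delta(t):=\sup_{r\in[-\tau,t]}\delta(r)$ satisfies $\Delta(t)\le e^{Lt}\delta_0+Le^{Lt}\int_0^t\Delta(s)\,ds$. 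Lemma \ref{Gronwall1} then produces an explicit continuous increasing function $r$ with $r(0)=1$ such that $\mathcal{W}_1(\mu_t,\nu_t)=\sup_{r\in[t-\tau,t]}\delta(r)\le\Delta(t)\le r(t)\,\delta_0$, which is \eqref{cotacontinuidad}.

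\emph{Main obstacle.} All the ingredients are standard (pushforward representation of solutions to \eqref{Transport}, Grönwall, Banach fixed point), so the only real difficulty is the delayed structure of the driving field: for $t\in[0,\tau]$ the vector field $\tilde\F[\mu](t,\cdot)$ sees simultaneously the unknown $\mu$ on $[0,t]$ and the prescribed datum $\mu_{in}$ on $[t-\tau,0]$, which turns each Grönwall step into a delayed one and is the point requiring care; likewise, checking that $\Phi$ is well-defined and continuous into $C([-\tau,T],\Prob_1(\Rd))$ when $\mu_{in}(0)$ is only assumed to lie in $\Prob_1(\Rd)$ relies on the linear-growth/integrability observation above rather than on compactness of supports.
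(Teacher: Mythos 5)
Your proposal is correct and follows essentially the same route as the paper: recast \eqref{MFEqu} as a fixed point of the map that pushes $\mu_{in}(0)$ forward along the flow of the frozen vector field $\tilde\F[\mu]$, establish the linear-growth, Lipschitz, and delayed Gr\"onwall estimates \eqref{EqE1}--\eqref{EqE2} and \eqref{Eq20}, apply Banach's theorem, and prove continuous dependence by the same splitting into the two terms bounded by $e^{Lt}W_1(\mu_{in}(0),\nu_{in}(0))$ and $Le^{Lt}\int_0^t\mathcal{W}_1(\mu_s,\nu_s)\,ds$. The only (cosmetic) difference is that you obtain the contraction by iterating $\Phi$ to get the factor $(Le^{LT})^nT^n/n!$ on a fixed interval, whereas the paper takes $T$ small and restarts the argument on $[T,2T],[2T,3T],\dots$.
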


\noindent The proof of this result is given in section \ref{ProofThmTeoExist} below.

As before, the estimate \eqref{cotacontinuidad} allows us to consider equation
\eqref{MFEqu} as the correct limit as $N\to +\infty$ of the interacting agents system
\eqref{MainEqDelayImperfect}.

However, system \eqref{MainEqDelayImperfect} is an example of the general system
\eqref{MainEqDelay}, we must study how both  equations \eqref{MFEqu} and \eqref{FP5} are
related. For any $s\in [-\tau,0]$, denote $ev(s)$ the evaluation operator defined by
$$
\begin{array}{rccl}
  ev[s]:& E &\rightarrow & \R^d \\
   &\sigma&\mapsto & \sigma(s).
\end{array}
$$
Notice in particular that for any $\mu(0)\in \Prob(E)$ and any $s\in [-\tau,0]$,
we have $ev(s)\sharp\mu(0)\in \Prob(\R^d)$.
Moreover if we assume that $\mu(0)$ is supported in some ball $B_0(R_0)\subset E$ then
the curve $s\in [-\tau,0]\to ev(s)\sharp\mu(0)\in \Prob(\R^d)$ is continuous.

So, we have

\begin{theorem}\label{Coherent}
Fix some  $R_0>0$ and an initial distribution $\mu(0)\in \Prob(E)$  supported in the ball
$B_0(R_0)\subset E$, and denote $\mu(t)$ be the unique solution of the fixed-point equation
\eqref{FP5} as given by Theorem \ref{GeneralThm}. Then $\tilde\mu(t):=ev(0)\sharp \mu(t)\in
\Prob(\R^d)$ is the unique solution of \eqref{MFEqu} with initial condition $\tilde
\mu(s):=ev(s)\sharp\mu(0)$, $s\in [-\tau,0]$.
\end{theorem}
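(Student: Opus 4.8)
The plan is to show that the push-forward $\tilde\mu(t):=ev(0)\sharp\mu(t)$ solves the mean-field equation \eqref{MFEqu}, and then to invoke the uniqueness half of Theorem \ref{TeoExist}. The starting point is the characterization \eqref{FP5} of $\mu(t)$ together with the explicit form \eqref{definicionSt} of the extension operator. First I would observe that, since the specific kernel \eqref{DefKImperfect} has the form $K(x,\sigma)=\int_{-\tau}^0\tilde K(x,\sigma(s))\,d\rho(s)$, the induced vector field \eqref{VF} satisfies, for any $\mu(t)\in\Prob(E)$,
\begin{align*}
\F[\mu](t,x)=\int_E K(x,\sigma)\,\mu(t)(d\sigma)
&=\int_E\int_{-\tau}^0\tilde K(x,\sigma(s))\,d\rho(s)\,\mu(t)(d\sigma)\\
&=\int_{-\tau}^0\Big(\int_{\R^d}\tilde K(x,y)\,(ev(s)\sharp\mu(t))(dy)\Big)\,d\rho(s),
\end{align*}
where the last line uses Fubini (legitimate since $\tilde K$ is Lipschitz and the supports stay bounded) and the defining property \eqref{pushforward} of the push-forward. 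Hence if I set $\tilde\mu(t):=ev(0)\sharp\mu(t)$ for $t\ge 0$ and $\tilde\mu(s):=ev(s)\sharp\mu(0)$ for $s\in[-\tau,0]$, then $\F[\mu](t,x)=\tilde\F[\tilde\mu](t,x)$ precisely when $ev(s)\sharp\mu(t)=\tilde\mu(t+s)$ for $s\in[-\tau,0]$, i.e. when the time-shifted family generated by $\mu$ agrees with the family $\tilde\mu$. So the crux is to establish this consistency $ev(s)\sharp\mu(t)=\tilde\mu(t+s)$.

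The consistency is exactly where the extension/restriction structure of \eqref{FP5} pays off. From \eqref{FP5}, $\mu(t)=(res[t]\circ ext[\mu])\sharp\mu_{in}$. Composing with the evaluation $ev(s)$ and using $res[t](\xi)(s)=\xi(t+s)$ for $\xi\in C([-\tau,+\infty),\R^d)$, I get, for $t\ge 0$ and $s\in[-\tau,0]$,
\[
ev(s)\sharp\mu(t)=\big(ext[\mu](\cdot)(t+s)\big)\sharp\mu_{in}.
\]
Now split according to the sign of $t+s$: if $t+s\ge 0$, then by \eqref{definicionSt}, $ext[\mu](\sigma)(t+s)=\T(t+s,\sigma(0);\mu)$, so $ev(s)\sharp\mu(t)=\T(t+s,\cdot;\mu)\sharp(ev(0)\sharp\mu_{in})=\T(t+s,\cdot;\mu)\sharp\tilde\mu_{in}(0)$; and if $t+s<0$ (only possible when $t<\tau$), then $ext[\mu](\sigma)(t+s)=\sigma(t+s)$, so $ev(s)\sharp\mu(t)=ev(t+s)\sharp\mu_{in}=\tilde\mu(t+s)$ by definition of $\tilde\mu$ on $[-\tau,0]$. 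In the first case one has to identify $\T(t+s,\cdot;\mu)\sharp\tilde\mu(0)$ with $\tilde\mu(t+s)$: this is the statement that $\tilde\mu$ is transported by the flow of $(t,x)\mapsto\F[\mu](t,x)$, which by the ODE preliminaries (the passage from \eqref{EDO} to \eqref{Transport}) is equivalent to $\tilde\mu$ solving $\p_t\tilde\mu+\mathrm{div}(\F[\mu]\tilde\mu)=0$ with the prescribed initial data on $[-\tau,0]$. Putting the two cases together with the formula from the first paragraph closes a loop: $\tilde\mu$ is transported by the flow of $\tilde\F[\tilde\mu]$, i.e. $\tilde\mu$ solves \eqref{MFEqu}. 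Its support is compact for $t\ge0$ because $\mu(t)$ is, being a push-forward of the compactly supported $\mu_{in}$ under the continuous maps $res[t]\circ ext[\mu]$, and $ev(0)$ is continuous.

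Having produced a solution of \eqref{MFEqu} with the correct initial condition $\tilde\mu(s)=ev(s)\sharp\mu(0)$, $s\in[-\tau,0]$ (which is continuous in $s$ by the remark preceding the statement, and compactly supported), uniqueness follows immediately from Theorem \ref{TeoExist}. The main obstacle, as indicated, is the bookkeeping in the consistency step $ev(s)\sharp\mu(t)=\tilde\mu(t+s)$: one must be careful that the two descriptions of $\tilde\mu$ — "push-forward by $ev(0)$ of the fixed point $\mu(t)$" for $t\ge0$ versus "push-forward by $ev(s)$ of $\mu(0)$" for $s\in[-\tau,0]$ — glue continuously at $t=0$ and are mutually compatible under the time shift, and that the flow $\T(\cdot;\mu)$ appearing inside $ext[\mu]$ is indeed the flow of $\tilde\F[\tilde\mu]$ once the kernel has the special form \eqref{DefKImperfect}; this last point is the Fubini identity of the first paragraph, but it is what makes the whole argument go through, since a priori $\T(\cdot;\mu)$ only "knows" the full path-space measure $\mu(t)$, not its $\R^d$-marginal.
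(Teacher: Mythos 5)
Your proposal is correct and follows essentially the same route as the paper: you establish the compatibility relation $ev(s)\sharp\mu(t)=\tilde\mu(t+s)$ from the fixed-point equation \eqref{FP5}, deduce $\F[\mu]=\tilde\F[\tilde\mu]$ via the Fubini computation on the kernel \eqref{DefKImperfect}, identify $\tilde\mu(t)$ as the push-forward of $\tilde\mu(0)$ under the (now common) flow, and conclude by the uniqueness part of Theorem \ref{TeoExist}. The only cosmetic difference is that the identification $\T(t+s,\cdot;\mu)\sharp\tilde\mu(0)=\tilde\mu(t+s)$, which you phrase via the transport equation, follows directly by applying your own formula $ev(s)\sharp\mu(t)=\bigl(ext[\mu](\cdot)(t+s)\bigr)\sharp\mu_{in}$ at $(t+s,0)$ in place of $(t,s)$, exactly as the paper does.
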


\noindent The proof of this result is deferred to section \ref{ProofThemCOherent} below.

\section{Conclusions and future work}

In this work we considered an interacting population of agents incorporating a delay in the
dynamic. We argued that a useful way to study this dynamic for an arbitrary number of
agents is through a delayed differential equation with random initial condition where the
vector field depends on the distribution of the solution itself. We showed well-posedness of
this equation using a fixed-point argument. Eventually we studied a particular case where
information is lost and how it is related to the general case.

\medskip

To conclude we  want to comment briefly on some possible extensions of this work.

First we could add a noise to the random delayed differential equation \eqref{MainRDDE},
and consider for instance the stochastic delayed differential equation
\begin{equation}\label{NoiseRDDE}
\left\{
\begin{array}{rl}
 dX(t) & = \F[\mu(t)](t,X(t))dt + \sigma(\mu(t))dW(t), \qquad \mu(t)=\mathcal{L}(X_t)  \\
\mu(0) & = \mathcal{L}(X_0)
\end{array}
\right.
\end{equation}
This is a delayed version of the classic McKean-Vlasov process \cite{McKean}. 
In the absence of delay, well-posedness results are known under various regularity and growh  assumptions of the coefficients (see e.g. the classical \cite{Sznitman}, the references in 
\cite{bao2020milstein},  and also Chapter 5 in \cite{ramirez2015existence} and Chapter 6 in \cite{kolokoltsov2010nonlinear}
 for a very general theory). We also mention the recent preprint
\cite{bao2020milstein} where the authors study a McKean stochastic equation, with a {\it
fixed} delay $\tau$, of the form
\begin{align*}
 dX(t) = & b(X(t),X(t-\tau),\mathcal{L}(X(t),\mathcal{L}(X(t-\tau)))dt  \\
& + \sigma(X(t),X(t-\tau),\mathcal{L}(X(t),\mathcal{L}(X(t-\tau)))dW(t).
\end{align*}

Delayed dynamic for interacting population could also be interesting from a modelling point
of view. Indeed, an agent updates its position reacting to the past trajectories of other
agents. In particular he could react to some geometric features of these trajectories thus
allowing to anticipate to some extent the future movements of others agents. This should be
interesting in the modelling of crowds or interactions predator-preys.

\bigskip

The remaining sections of the paper are devoted to the proof of Theorem \ref{GeneralThm},
Theorem \ref{TeoExist} and Theorem \ref{Coherent}.

\section{Proof of Theorem \ref{GeneralThm}}\label{SectionProofGeneralThm}

This section is devoted to the proof of Theorem \ref{GeneralThm} concerning the well-posedness
of
 \begin{equation}\label{MainRDDE2}
\left\{
\begin{split}
  X'(t) &= \F[\mu](t,X(t))=\int_\C K(X(t),\sigma)\,  d\mu(t,\sigma),\, \mu(t) = \mathcal{L}(X_t) \qquad t\ge 0, \\
  \mu(0) & = \mathcal{L}(X_0)
\end{split}
\right.
\end{equation}
where $K:\R^d \times E\to \R^d$, with $E=C([-\tau,0],\R^d)$, satisfies assumption (H) namely there exists $L>0$ such that for any $x,\tilde x\in\R^d$ and any $\sigma,\tilde\sigma\in E$,
$$ |K(x,\sigma) - K(\tilde x,\tilde\sigma)|\le L(|x-\tilde x|+|\sigma-\tilde\sigma|). $$

The proof consists in two steps.
First we want to rewrite \eqref{MainRDDE2} as a fixed-point problem.
We fix some $T>0$.
Notice first that, as a consequence of assumption (H),
for any $\nu\in C([0,T],\Prob_1(E))$, the vector field
$(t,x)\to \F[\nu](t,x)$ is continuous and Lipschitz in $x\in\R^d$ uniformly in $t\in [0,T]$.
Indeed
\begin{eqnarray*}
&& |\F[\nu](t,x)-\F[\nu](\tilde t,\tilde x)| \\
&& \qquad \le \Big| \int K(x,\sigma)\Big(\nu(t)(d\sigma)-\nu(\tilde t)(d\sigma)\Big) \Big|
+ \int |K(x,\sigma)-K(\tilde x,\sigma)|\,\nu(\tilde t)(d\sigma) \\
&& \qquad  \le L\Big(W_1(\nu(t),\nu(\tilde t))+ |x-\tilde x|\Big)
\end{eqnarray*}
where we used that the function $\sigma\to K(x,\sigma)$ is L-Lipschitz for any $x\in\R^d$.
Let $\T(s,t,x;\nu)$ be the flow of $\F[\nu](t,x)$, i. e.,
\begin{equation*}
\left\{
\begin{split}
 \frac{d}{dt} \T(s,t,x;\nu) & = \F[\nu](\T(s,t,x;\nu),t) \qquad t\in\R, \\
\T(s,s,x;\nu) & = x.
\end{split}
\right.
\end{equation*}
Then the solution of the DDE
$$  \frac{d}{dt} x(t) = \F[\nu](t,x(t)) $$
starting from $x_0\in E$ is
\begin{equation}
x(t) =
\begin{cases}
 x_0(t) \qquad t\in [-\tau,0], \\
 \T(0,t,x(0);\nu) \qquad t\ge 0.
\end{cases}
\end{equation}
If the initial condition is a random variable $X_0:\Omega\to E$, then the unique pathwise solution to
 \begin{equation}\label{MainRDDE3}
  X'(t) = \F[\nu](t,X(t))
\end{equation}
is the random variable $X(t)$ defined by
\begin{equation}
X(t)(\omega)=
\begin{cases}
  X_0(t)(\omega) \qquad t\in [-\tau,0], \\
  \T(0,t,X_0(0);\nu)(\omega)) \qquad t\ge 0
\end{cases}
\end{equation}
for any $\omega\in \Omega$.

To simplify notation we introduce the following extension operator
\begin{equation}\label{definicionSt2}
\begin{array}{rccl}
  ext[\nu]:& E &\rightarrow & C([-\tau,+\infty),\R^d)\\
  &\sigma &\mapsto &\left\{ \begin{array}{ll}
    \sigma(t) &\text{ if } t\leq 0 \\
    \T(0,t,\sigma(0);\mu)&\text{ if } t> 0.
  \end{array}
  \right.
\end{array}
\end{equation}
Thus $X(t)= ext[\nu](X_0)$.
Snce $E$ is the natural state space for DDE, we also need the restriction operator
$res[t]:C([-\tau, T],\mathbb{R}^d)\to E$ defined by
$$
\begin{array}{rccl}
  res[t]:&C([-\tau, +\infty),\mathbb{R}^d)&\rightarrow & E \\
   &\sigma&\mapsto & \sigma_t.
\end{array}
$$
Notice that
$$res[t]\circ ext[\nu]:E\to E $$
and that
$$ X_t = (res[t]\circ ext[\nu])(X_0). $$
Denoting $\tilde\nu(t)$ the distribution of $X_t$ and $\mu(0)$ that of $X_0$, it follows that
\begin{equation}\label{Fixed1}
 \tilde\nu_t = (res[t]\circ ext[\nu])\sharp\mu_0.
\end{equation}
Indeed, for any $\phi:E\to \R$ measurable and bounded, we have
\begin{align*}
\int_E\phi\,d\tilde\nu(t)
 & =  \mathbb{E}[\phi(X_t)] \\
& = \mathbb{E}[\phi((res[t]\circ ext[\nu])(X_0))]
\\
& =  \int \phi\circ(res[t]\circ ext[\nu])\,d\mu(0) \\
&  =  \int \phi\,d((res[t]\circ ext[\nu])\sharp\mu(0)).
\end{align*}
In conclusion, for any $\nu\in C([0,+\infty),\Prob_1(E))$ the solution $X_t$ to
\eqref{MainRDDE3} with an initial condition distributed as $\mu_0$ has distribution
$\tilde\nu_t$ given by \eqref{Fixed1}.
We thus deduce that

\begin{step}
A stochastic process $X_t$ with distribution $\mu_t$ is solution of \eqref{MainRDDE2}
path-wise with initial condition $\mu_0$ if and only if $\mu_t$ satisfies the fixed-point
equation
\begin{equation}\label{Fixed3}
 \mu_t = (res[t]\circ ext[\mu])\sharp\mu_0 \qquad t\ge 0.
\end{equation}
\end{step}

We must therefore prove that the fixed-point equation \eqref{Fixed3} has a unique solution.
From now on we assume that the distribution $\mu_0$ of the initial condition is supported in
the $B_0(R_0)\subset E$. For a time $T>0$ to be specified later, we consider the map
$\Gamma$ defined by
\begin{equation}\label{definicionGamma}
  \begin{array}{ccl}
  \Gamma: C([0,T],\mathbb{P}_1(E))   &\rightarrow &  C([0,T],\mathbb{P}_1(E))  \\
    \mu(\cdot) &\mapsto &   (res[\cdot]\circ ext[\mu])\#\mu_0.
\end{array}
\end{equation}
We also consider the set
$$ \A_T:= \Big\{\mu\in C([0,T],\mathbb{P}_1(E)):\, \mu(0)=\mu_{\inp},\, \text{ and }
\text{supp}\,\mu(t)\subset B_0(2R_0) \, \forall\,t\in [0,T]  \Big\}. $$
We endow $\Prob_1(E)$ with the $W_1$ distance and $C([0,T],\Prob_1(E))$ with the associated $\sup$ distance
$\max_{t\in [0,T]} W_1(\mu(t),\nu(t))$ for $\mu,\nu\in C([0,T],\Prob_1(E))$.
Since $(\Prob_1(E),W_1)$ is complete, $C([0,T],\Prob_1(E))$ is also complete.
Endowed with the same distance,  $\A_T$ is therefore complete as a closed subspace of $C([0,T],\mathbb{P}_1(E))$.

We will apply the classical Banach fixed point to $\Gamma$  and $\A_T$ for  $T$ small
enough.

\begin{step}\label{Step2FP}
There exists $T>0$ depending only on $L$ (as defined in assumption (H)) such that
$ \Gamma(\A_T)\subset \A_T$ and $\Gamma$ is  a contraction in $\A_T$ in the sense that  there is a   $C\in [0,1)$ such that
\begin{equation*}
 \max_{t\in[0, T]} W_1(\Gamma(\mu)(t),\Gamma(\nu)(t))
 \le C \max_{t\in[0, T]} W_1\left(\mu(t), \nu(t)\right).
 \end{equation*}
\end{step}

It follows that $\Gamma$ has a unique fixed point in $\A_T$. We can then repeat the argument in
$[T,2T], [2T,3T],...$ to obtain the unique $\mu$ solving \eqref{Fixed3}.

Before beginning the proof of Step \ref{Step2FP}, we state an easy lemma concerning the flow
$\T(s,t,x;\mu)$ of the  vector field $(t,x)\to \F[\mu](t,x)$ for $\mu\in\A_T$.

\begin{lem}\label{PropFlow}
There exists $C>0$ depending only on $R_0$ and $K$ such that for any $x,y\in \Rd$, any $\mu,\nu\in \A_T$, and any $t\in [0,T]$, there holds
\begin{eqnarray}
&& |\T(t,x;\mu)|\le  |x|e^{Ct} + (e^{Ct}-1)(1+2R_0), \label{EqE1} \\
&& |\T(t,x;\mu)-\T(t,y;\mu)|\le e^{Lt} |x-y|, \label{EqE2} \\
&& |\T(t,x;\mu)-\T(t,x;\nu)| \nonumber \\
&&   \hspace{1cm}\le L\int_0^t \left( W_1(\mu(h),\nu(h))
+   L\int_0^h W_1(\mu(r),\nu(r))\,dr\, e^{L(t-h)} \right)\,dh. \label{EqE3}
\end{eqnarray}
\end{lem}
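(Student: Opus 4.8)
The plan is to derive all three estimates from Gr\"onwall-type comparisons for the curve $y(\cdot):=\T(\cdot,x;\mu)$, which by definition of the flow solves $y'(t)=\F[\mu](t,y(t))$ with $y(0)=x$. Two elementary facts about the vector field will be needed. First, a linear growth bound: since $\text{supp}\,\mu(s)\subset B_0(2R_0)$ for $\mu\in\A_T$, assumption (H) gives $|K(z,\sigma)|\le |K(0,0)|+L(|z|+\|\sigma\|_\infty)\le |K(0,0)|+L|z|+2LR_0$ for every $\sigma$ in that ball, whence $|\F[\mu](t,z)|\le C(|z|+1+2R_0)$ with $C:=\max\{|K(0,0)|,L\}$, a constant depending only on $K$ (and $R_0$). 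Second, the two Lipschitz estimates $|\F[\mu](t,a)-\F[\mu](t,b)|\le L|a-b|$ and $|\F[\mu](t,z)-\F[\nu](t,z)|\le L\,W_1(\mu(t),\nu(t))$: the first is immediate from (H), and for the second one uses that $\sigma\mapsto K(z,\sigma)$ is $L$-Lipschitz on $E$, so each scalar component of the $\R^d$-valued integral $\int_E K(z,\sigma)\,(\mu(t)-\nu(t))(d\sigma)$ is controlled by $L\,W_1(\mu(t),\nu(t))$ through \eqref{DefW1}, and taking the supremum over unit directions controls the vector itself.

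With these in hand, for \eqref{EqE1} I would integrate the equation and insert the growth bound: writing $g(t):=|\T(t,x;\mu)|+1+2R_0$ one obtains $g(t)\le g(0)+C\int_0^t g(s)\,ds$, so Gr\"onwall gives $g(t)\le g(0)e^{Ct}=(|x|+1+2R_0)e^{Ct}$, which is exactly \eqref{EqE1} after rearranging. For \eqref{EqE2}, subtracting the integral forms of $\T(\cdot,x;\mu)$ and $\T(\cdot,y;\mu)$ and using the Lipschitz-in-the-spatial-variable bound gives $|\T(t,x;\mu)-\T(t,y;\mu)|\le |x-y|+L\int_0^t|\T(s,x;\mu)-\T(s,y;\mu)|\,ds$, and Lemma \ref{Gronwall1} with $a\equiv|x-y|$, $b\equiv L$ produces the factor $e^{Lt}$.

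For \eqref{EqE3}, set $w(t):=|\T(t,x;\mu)-\T(t,x;\nu)|$ and split the integrand $\F[\mu](s,\T(s,x;\mu))-\F[\nu](s,\T(s,x;\nu))$ as $\big(\F[\mu](s,\T(s,x;\mu))-\F[\mu](s,\T(s,x;\nu))\big)+\big(\F[\mu](s,\T(s,x;\nu))-\F[\nu](s,\T(s,x;\nu))\big)$; bounding the two pieces by the two Lipschitz estimates above yields $w(t)\le L\int_0^t w(s)\,ds+L\int_0^t W_1(\mu(s),\nu(s))\,ds$. Then Lemma \ref{Gronwall1} with $b(t):=L$ and $a(t):=L\int_0^t W_1(\mu(s),\nu(s))\,ds$ gives $w(t)\le a(t)+L\int_0^t a(h)e^{L(t-h)}\,dh$; substituting $a(h)=L\int_0^h W_1(\mu(r),\nu(r))\,dr$ and collecting the two integrals over $[0,t]$ reproduces verbatim the right-hand side of \eqref{EqE3}. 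The only points needing a little care are checking that the constant in \eqref{EqE1} depends only on $K$ and $R_0$ and the passage from the scalar Kantorovich duality \eqref{DefW1} to the $\R^d$-valued kernel in the second Lipschitz estimate; I do not anticipate a genuine obstacle, this being essentially a bookkeeping lemma feeding the contraction argument of Step \ref{Step2FP}.
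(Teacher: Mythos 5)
Your proposal is correct and follows essentially the same route as the paper's proof: the linear growth bound $|\F[\mu](t,z)|\le C(1+|z|+2R_0)$ for \eqref{EqE1}, the $L$-Lipschitz-in-$x$ bound for \eqref{EqE2}, and the splitting of $\F[\mu](s,\T(s,x;\mu))-\F[\nu](s,\T(s,x;\nu))$ into a spatial and a measure increment for \eqref{EqE3}, each followed by Gr\"onwall. Your extra remark on passing from the scalar duality \eqref{DefW1} to the $\R^d$-valued integral is a legitimate bookkeeping point that the paper glosses over, but it does not change the argument.
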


\begin{proof}
In view of assumption (H) there exists $C>0$ depending only on $K$ such that
$|K(x,\sigma)|\le C(1+|x|+|\sigma|)$ for any $x\in\R^d$ and $\sigma\in E$.
Since $\mu(t)$ is supported in $B_0(R_0)\subset E$ for any $t\in [0,T]$ we obtain
$$ \F[\mu](t,x)\le C\int_E (1+|x|+|\sigma|)\,\mu(t)(d\sigma)
\le C(1+|x|+2R_0).  $$
From this we obtain
\begin{equation*}
\begin{array}{rl}
 |\T(t,x;\mu)|
& \le |x|+\int_0^t |\F[\mu](\T(h,x;\mu),h)|\,dh  \\
& \le |x| +  C(1+2R_0)t +   C\int_0^t |\T(h,x;\mu)|\,dh.
\end{array}
\end{equation*}
We deduce \eqref{EqE1}  applying   Gr\"onwall's lemma \ref{Gronwall1}.

To prove \eqref{EqE2} we first notice that
\begin{equation}\label{LipF}
 |\F[\mu](t,x)-\F[\mu](t,y)|
\le \int_E |K(x,\sigma)-K(y,\sigma)|\,\mu(t)(d\sigma)
\le L|x-y|.
\end{equation}
Thus
\begin{eqnarray*}
|\T(t,x;\mu)-\T(t,y;\mu)|
& \le & |x-y|+\int_0^t |\F[\mu](h,\T(h,x;\mu))-\F[\mu](h,\T(h,y;\mu))| \,dh \\
& \le & |x-y|+L\int_0^t |\T(h,x;\mu)-\T(h,y;\mu)| \,dh.
\end{eqnarray*}
We deduce \eqref{EqE2}  applying  Gr\"onwall's lemma \ref{Gronwall1}.

We eventually prove \eqref{EqE3}. First for any $h\in [0,T]$ and $x\in\R^d$,
\begin{eqnarray*}
\left|\F[\mu](h,x)-\F[\nu](h,x)\right|
\le \Big| \int_E K(x,\sigma)\,(\mu(h)-\nu(h))(d\sigma)\Big|
\le LW_1(\mu(h),\nu(h))
\end{eqnarray*}
where we used that $K(x,\sigma)$ is $L$-Lipschitz in $\sigma$ for any $x\in\R^d$.
Thus using \eqref{LipF} we obtain
\begin{align*}
\left|\F[\mu](h,x)-\F[\nu](h,y)\right|
& \le \left|\F[\mu](h,x)-\F[\mu](h,y)\right|  + \left|\F[\mu](h,y)-\F[\nu](h,y)\right| \\
& \le L|x-y| + LW_1(\mu(h),\nu(h)).
\end{align*}
Thus
\begin{align*}
  | \T(t,x;\mu) - \T (t,x;\nu) |
& \le \int_0^t |\F[\mu](h,\T(h,x;\mu)) - \F[\nu](h,\T(h,x;\nu))|\,dh \\
& \le L\int_0^t W_1(\mu(h),\nu(h))\,dh +   L\int_0^t | \T(h,x;\mu) - \T (h,x;\nu) |  \,dh.
\end{align*}
Applying   Gr\"onwall's lemma gives \eqref{EqE3}.
\end{proof}

We are now in position to prove Step \ref{Step2FP}:

\begin{proof}[Proof of Step \ref{Step2FP}. ]
We first prove that $\Gamma(\A_T)\subset \A_T$. Fix some $\mu \in \A_T$.
Since $\mu_{\inp}$ is a probability measure it follows from the definition \eqref{pushforward} of the push-forward  that $\Gamma (\mu)(t) $ is a probability  measure for all $t\geq 0$.
Also because $res[0] \circ ext[\mu] $ is the identity in $E$  we have $\Gamma(\mu)(0)=\mu_{\inp}$.

For any $\sigma\in E$ such that  $\|\sigma\|\leq R_0$ we have by definition of $ext[\mu]$ that
for any $t\in [0,T]$,
\begin{align*}
  \|res[t]\circ ext[\mu](\sigma)\|
\leq \max_{h\in[-\tau,T]}|ext[\mu](\sigma)(h)|
   \leq \max_{h\in[0,T]}\{\left|\T_h(\sigma(0);\mu)\right|,\|\sigma\|\}.
\end{align*}
Using \eqref{EqE1}  we deduce that
\begin{align*}
  \|res[t]\circ ext[\mu](\sigma)\|
  &\leq \max \left\{ R_0e^{CT}+(e^{CT}-1)(1+2R_0),R_0\right\}
\end{align*}
which is $\le 2R_0$ for $T$ small enough depending only on $C$ i. e. on $K$. As a
consequence for any $\varphi: E\to\R$ supported outside the ball $  B_0(2R_0)$, we have
$\varphi(res[t]\circ ext[\mu](\sigma))=0$ for any
  $\|\sigma\|\le  R_0$. Since  $\mu_{\inp}$ is supported in $B_0(R_0)$ we obtain
$$\int_E\varphi(\sigma)\,d\Gamma(\mu)(\sigma,t)
=\int_E \varphi(res[t]\circ ext[\mu](\sigma))\,d\mu_{\inp}(\sigma) =0.$$ Thus,
$\text{supp}\,\Gamma(\mu)(t)\subset B_0(2R_0)$ for any $t\in [0,T]$.

We eventually prove that $\Gamma(\mu)(t)$ is continuous in $t$ for the distance $W_1$.
Since $\text{supp}\,\Gamma(\mu)(t)\subset B_0(2R_0)$, it is enough to prove that
$$ \lim_{s\to t} \int \phi \,d(\Gamma(\mu)(s)-\Gamma(\mu)(t))=0 $$
for any $\phi:E\to \E$ bounded continuous. Fix such a $\phi$. Then
$$ \int_E \phi \,d(\Gamma(\mu)(s)-\Gamma(\mu)(t))
= \int_E \phi(res[s]\circ ext[\mu](\sigma)) - \phi(res[t]\circ ext[\mu](\sigma))
 \,d\mu(0)(\sigma).
$$
We can pass to the limit $s\to $ by applying the Dominated Convergence Theorem using that
$res[s]\circ ext[\mu](\sigma)$ is continuous in $t$.

\medskip

We now verify that $\Gamma$ is a strict contraction for $T$ small enough.
Let  $\tau_0=\min\{\tau,t\}$.
For any  $\mu,\,\nu\in \A_T$ we have using   \eqref{EqE3} that
\begin{equation}\label{Estim10}
\begin{array}{rl}
  & \|res[t]\circ ext[\mu](\sigma)- res[t]\circ ext[\nu](\sigma)\| \\
  &=\max_{s\in[-\tau_0,0]}|\T_{t+s}(\sigma(0);\mu)-\T_{t+s}(\sigma(0);\nu)| \\
 & \leq \max_{s\in[-\tau_0,0]}
\left|L\int_0^{t+s} W_1(\mu( h),\nu( h)) \, dh \right. \\
&\hspace{2.4cm}
 \left. +L^2\int_0^{t+s}\int_0^h W_1(\mu(r),\nu(r))\, dr \, e^{L(t+s-h)}\, dh\right|\\
 & \leq L\int_0^{t} W_1(\mu( h),\nu( h)) \, dh
+L^2\int_0^{t}\int_0^h W_1(\mu(r),\nu(r))\, dr \, e^{L(t-h)}\, dh.
\end{array}
\end{equation}
Then, for every $\varphi:E\to \Rd$ 1-Lipschitz and any $t\in [0,T]$, we have using
\eqref{Estim10} that
\begin{align*}
 \left| \int_E \varphi(\sigma) \,\right.& d(\Gamma(\mu)(t)-\Gamma(\nu)(t))(\sigma)\Big| \\
 &=\int_E |\varphi(res[t]\circ ext[\mu](\sigma)) -
\varphi(res[t]\circ ext[\nu](\sigma))| \, d\mu_{\inp}(\sigma) \\
 &=\int_E \|(res[t]\circ ext[\mu](\sigma)) - (res[t]\circ ext[\nu](\sigma))\| \,
 d\mu_{\inp}(\sigma) \\
& \le \max\{L,L^2\}C_T\max_{h\in [0,T]} W_1(\mu( h),\nu( h)),
 \end{align*}
where $C_T\to 0$ as $T\to 0$.
Taking  the supremum  over all such  $\varphi$ we deduce
$$ \max_{t\in [0,T]} W_1(\Gamma(\mu)(t),\Gamma(\nu)(t))
\le \max\{L,L^2\} C_T\max_{h\in [0,T]} W_1(\mu( h),\nu( h)). $$ We thus take $T$ small
enough so that $\max\{L,L^2\}C_T<1$.
\end{proof}

We eventually prove that the unique fixed point of $\Gamma$ depends continuously on the initial condition.

\begin{step}
Given two initial condition $\mu_{ino},\nu_{inp}\in \Prob(E)$ supported in some ball $B_0(R_0)$,
the corresponding fixed-point $\mu$ and $\nu$ solution for $t\ge 0$ of
$$  \mu_t = (res[t]\circ ext[\mu])\sharp\mu_{inp} \qquad  \text{and} \qquad
\nu_t = (res[t]\circ ext[\nu])\sharp\nu_{inp}  $$
satisfy
\begin{equation}\label{ContDep}
  W_1(\mu( t),\nu(t))\leq
r(t) W_1(\mu_{\inp},\nu_{\inp}).
\end{equation}
for some continuous increasing function $r:[0,+\infty)\to [1,\infty)$ such that $r(0)=1$.
\end{step}

\begin{proof}
Observe first that for any $\mu\in \A_T$ and any $\sigma,\,\tilde\sigma\in E$, we have using \eqref{EqE2} that
\begin{align*}
&   \|res[t]\circ ext[\mu] (\sigma)- res[t]\circ ext[\mu] (\tilde\sigma) \|\\
  &\qquad \leq  \max\left\{\max_{h\in[0, t]}|\T_h(\sigma(0);\mu)-T_h(\tilde\sigma(0);\mu)|,\|\sigma-\tilde\sigma\|\right\}\\
  &\qquad \leq  \max\left\{\max_{h\in[0, t]}
        e^{L h}\,|\sigma(0)-\tilde\sigma(0)|,\|\sigma-\tilde\sigma\|\right\}\\
 &\qquad  \leq  e^{L t}\,\|\sigma-\tilde\sigma\|
\end{align*}
Therefore,   $(res[t]\circ ext[\mu])$  is $e^{L t}$-Lipschitz.

Then, for every $\varphi:\C\to \Rd$ 1-Lipschitz we have using \eqref{Estim10} that
\begin{align*}
& \Big| \int_E \varphi(\sigma) \,d(\mu( t)-\nu(t))(\sigma)\Big| \\
 &\qquad=\left| \int_E \varphi(res[t]\circ ext[\mu](\sigma)) \, d\mu_{\inp}(\sigma)
-\int_E \varphi(res[t]\circ ext[\nu](\sigma)) \, d\nu_{\inp}(\sigma)\right| \\
 &\qquad\leq  \Big|\int_E \varphi(res[t]\circ ext[\mu](\sigma)) \, d(\mu_{\inp}-\nu_{\inp})(\sigma)
\Big| \\
&\qquad \hspace{1cm} + \int_E |\varphi(res[t]\circ ext[\mu](\sigma)) -  \varphi(res[t]\circ ext[\nu](\sigma))|
\,d\nu_{\inp}(\sigma).
\end{align*}
Since  $(res[t]\circ ext[\mu])$  is $e^{L t}$-Lipschitz, the first term is bounded by
$e^{L t}\, W_1(\mu_{\inp},\nu_{\inp})$. We bound the 2nd term using \eqref{Estim10} by
\begin{eqnarray*}
&& \int_E |(res[t]\circ ext[\mu](\sigma)) - (res[t]\circ ext[\nu](\sigma))|
\,d\nu_{\inp}(\sigma)  \\
& & \le L\int_0^{t} W_1(\mu( h),\nu( h)) \, dh
+L^2\int_0^{t}\int_0^h W_1(\mu(r),\nu(r))\, dr \, e^{L(t-h)}\, dh  \\
&& \le   L\int_0^{t} W_1(\mu( r),\nu( r)) \, dr\left(1+L\int_0^t\, e^{L(t-h)}\, dh\right)
\end{eqnarray*}
We thus obtain
\begin{align*}
& \Big| \int_E \varphi(\sigma) \,d(\mu( t)-\nu(t))(\sigma)\Big|
\le   e^{L t} W_1(\mu_{\inp},\nu_{\inp})   + Le^{Lt}\int_0^{t} W_1(\mu( r),\nu( r)) \, dr.
\end{align*}
Taking the supremum over all such $\phi$ gives
\begin{align*}
& W_1(\mu(t),\nu(t))
\le   e^{Lt} W_1(\mu_{\inp},\nu_{\inp})   + L e^{Lt}\int_0^{t} W_1(\mu( r),\nu( r)) \, dr.
\end{align*}
We deduce \eqref{ContDep} applying Gronwall's Lemma \ref{Gronwall1}.
\end{proof}

\section{Proof of Theorem \ref{TeoExist}. } \label{ProofThmTeoExist}

The proof of Theorem \ref{TeoExist} is very similar to the proof of Theorem \ref{GeneralThm} so we will only sketch it.

We fix $R_0>0$ and an initial condition $\mu_{in}\in C([-\tau,0],\Prob(\R^d))$  such that    $\mu_{in}(s)(B_0(R_0))=1$ for any $s\in [-\tau,0]$.

Recall that for any  $\mu\in C([-\tau,T],\Prob(\R^d))$,
the unique solution in $C([-\tau,T],\Prob(\R^d))$ of
$$ \p_t\nu + \text{div}(\tilde\F[\mu]\nu)=0,\qquad t\ge 0 $$
with initial condition $\nu_0=\mu_{in}$ and where $\tilde F[\mu]$ is defined in \eqref{VF3},
is given by $\nu=\Gamma{\mu}$ where
$\Gamma: C([-\tau,T],\mathbb{P}_1(\Rd)) \rightarrow  C([-\tau,T],\mathbb{P}_1(\Rd))$
is defined by
$$ \Gamma[\mu](t) = \left\{
       \begin{array}{ll}
      		\mu_{in}(t)  &\text{ if } t\leq 0 ,\\
      		\tilde\T (t,\cdot;\mu)\#\mu_{in}(0)  &\text{ if } t\ge 0.
    \end{array}
    \right.
$$
Here $\tilde\T (t,x;\mu)$ is the flow of $\tilde F[\mu]$.
We must therefore prove that $\Gamma$ has a unique fixed point.

We  look for such a fixed point in the set $\A_T$ defined by
$$ \A_T = \{\mu\in C([-\tau,T],\mathbb{P}(\Rd)):\, \mu_0=\mu_{in},\,
\text{supp}\, \mu(t)\subset B_0(2R_0), \, \forall\,t\in[0,T]\}. $$ We endow
$C([-\tau,T],\mathbb{P}(\Rd))$ with the distance
$\max_{s\in[-\tau,T]}W_1\left(\mu(s),\nu(s)\right)$ that makes it complete. With the
induced distance, $\A_T$ is closed and thus complete.

Because we assumed that $\tilde K$ satisfies assumption (H'),
properties \eqref{EqE1} and \eqref{EqE2} stated in Lemma \ref{PropFlow} hold with
$\tilde T(t,x;\mu)$ in place of $T(t,x;\mu)$. Concerning  \eqref{EqE3}, it must be replaced by
\begin{equation}\label{Eq20}
\begin{split}
 |\tilde\T(t,x;\mu)-\tilde\T(t,x;\nu)|
\le Le^{Lt} \int_0^t \mathcal{W}_1(\mu_s,\nu_s)\,ds
 \end{split}
\end{equation}
for any $\mu,\nu\in\A_T$, $t\ge 0$, and $x\in\R^d$, and where
$\mathcal{W}_1(\mu_s,\nu_s)=\max_{h\in [-\tau,0]} W_1(\mu(s+h),\nu(s+h))$. Indeed,
\begin{align*}
|\tilde\F[\mu](t,x)-\tilde\F[\nu](t,\bar x)|
& \le  |\tilde\F[\mu](t,x)-\tilde\F[\mu](t,\bar x)|
+  |\tilde\F[\mu](t,\bar x)-\tilde\F[\nu](t,\bar x)| \\
& \le L|x-\bar x|  \\
& \qquad + \int_{-\tau}^0 \int_{\R^d} \tilde K(\bar x,y)(\mu(t+s)(dy)-\nu(t+s)(dy))\,d\rho(s).
\end{align*}
Since $K(\bar x,.)$ is $L$-Lipschitz, the second term can be bounded by
$$ L \int_{-\tau}^0 W_1(\mu(t+s)(dy),\nu(t+s)(dy))\,d\rho(s)
\le L\mathcal{W}_1(\mu_t,\nu_t).  $$
Thus
$$ |\tilde\F[\mu](t,x)-\tilde\F[\nu](t,\bar x)|
\le L|x-\bar x|+L\mathcal{W}_1(\mu_t,\nu_t).  $$
We then obtain
\begin{align*}
|\tilde\T(t,x;\mu)-\tilde\T(t,x;\nu)|
& \le \int_0^t |\tilde\F[\mu](s,\tilde\T(s,x;\mu))-\tilde\F[\nu](s,\tilde\T(t,x;\nu))| \,ds  \\
& \le L\int_0^t \mathcal{W}_1(\mu_s,\nu_s)\,ds
+ L \int_0^t |\tilde\T(s,x;\mu)-\tilde\T(s,x;\nu)|\,ds.
\end{align*}
Gronwall's inequality then gives \eqref{Eq20}.

We can then prove, as in the proof of Theorem \ref{GeneralThm}, that for a small enough $T$
depending only on $L$ (defined in assumption (H')), we have $\Gamma(\A_T)\subset \A_T$
and $\Gamma$ a strict contraction, i. e., there is a constant $C\in [0,1)$ such that
\begin{equation*}
 \max_{t\in[0, T]} W_1\left(\Gamma(\mu)(t),\Gamma(\nu)(t)\right)
 \le C \max_{t\in[0, T]} W_1(\mu(t), \nu(t)) \qquad \mu,\nu\in \A_T.
 \end{equation*}
We deduce the existence of a unique fixed point of $\Gamma$ in $\A_T$. Iterating this
argument gives the existence and uniqueness statement of Theorem \ref{TeoExist}.

To finish the proof we have to prove the continuous dependence of the solution with respect
to the initial data. Fix another initial condition $\nu_{in}$ and denote $\nu$ the
corresponding solution. For every 1-Lipschitz $\varphi:\Rd\to \R$ we have for $t\ge 0$ that
\begin{align*}
 & \int_\Rd\varphi(x)\, (\mu(t)(dx)-\nu(t)(dx))  \\
 &\qquad =\int_\Rd\varphi(\tilde\T(t,x;\mu))\,\mu_{in}(0)(dx)
  -\int_\Rd\varphi(\tilde\T(t,x;\nu))\,\nu_{in}(0)(dx)  \\
&\qquad = \int_\Rd\varphi(\tilde\T(t,x;\mu))\,(\mu_{in}(0)(dx)-\nu_{in}(0)(dx)) \\
& \qquad \qquad+ \int_\Rd\varphi(\tilde\T(t,x;\mu))-\varphi(\tilde\T(t,x;\nu))\,\nu_{in}(0)(dx)   \\
&\qquad=: A+B.
\end{align*}
We bound $A$ by
$$
A  \le  Lip(\varphi(\tilde\T(t,\cdot;\mu)))W_1(\mu_{in}(0),\nu_{in}(0))
 \le  e^{Lt}\,W_1(\mu_{in}(0),\nu_{in}(0)).
$$
Concerning $B$ we use that $\phi$ is 1-Lipschitz and \eqref{Eq20} to write
\begin{equation*}\label{acotacionT}
\begin{split}
B & \le \int_\Rd |\tilde\T(t,x;\mu))-\tilde\T(t,x;\nu)|\,\nu_{in}(0)(dx)
\le  Le^{Lt} \int_0^t \mathcal{W}_1(\mu_s,\nu_s)\,ds.
\end{split}
\end{equation*}
Thus after taking the supremum over all 1-Lipschitz functions  $\varphi$ we obtain
\begin{align*}
W_1(\mu(t),\nu(t))\leq  e^{Lt}\left(W_1(\mu_{in}(0),\nu_{in}(0))+
L\int_0^t  \mathcal{W}_1(\mu_s,\nu_s)\,dh\right) \qquad t\ge 0.
\end{align*}
Incorporating times $t\in [-\tau,0]$, we obtain
\begin{align*}
W_1(\mu(t),\nu(t))\leq  e^{Lt}\left(\mathcal{W}_1(\mu_{in},\nu_{in})+
L\int_0^t  \mathcal{W}_1(\mu_s,\nu_s)\,dh\right) \qquad t\ge -\tau.
\end{align*}
We have in particular that for any $t\ge 0$ and  $h\in [-\tau,0]$,
\begin{align*}
W_1(\mu(t+h),\nu(t+h))\leq  e^{Lt}\left(\mathcal{W}_1(\mu_{in},\nu_{in})+
L\int_0^t  \mathcal{W}_1(\mu_s,\nu_s)\,dh\right)
\end{align*}
so that
\begin{align*}
\mathcal{W}_1(\mu_t,\nu_t)\leq  e^{Lt}\left(\mathcal{W}_1(\mu_{in},\nu_{in})+
L\int_0^t  \mathcal{W}_1(\mu_s,\nu_s)\,dh\right) \qquad t\ge 0.
\end{align*}
Gronwall's lemma yields the result.

\section{Proof of Theorem \ref{Coherent}.} \label{ProofThemCOherent}

Fix some  $R_0>0$ and an initial distribution $\mu(0)\in \Prob(E)$  supported in the ball
$B_0(R_0)\subset E$, and denote $\mu(t)$ be the unique solution of the fixed-point equation
\eqref{FP5} as given by Theorem \ref{GeneralThm}. We must prove that
$\tilde\mu(t):=ev(0)\sharp \mu(t)\in \Prob(\R^d)$
 is the unique solution of \eqref{MFEqu}
with initial condition $\tilde \mu(s):=ev(s)\sharp\mu(0)$, $s\in [-\tau,0]$.

Let us first observe that
\begin{equation}\label{Compatibility}
ev(s)\sharp \mu(t)=\tilde\mu(t+s) \qquad \text{for any $t\ge 0$ and $s\in [-\tau,0]$.}
\end{equation}
Indeed for any $\sigma\in C([-\tau,+\infty),\R^d)$, we have
$$ (ev(s)\circ res(t))(\sigma) = \sigma_t(s)=\sigma(t+s) = \sigma_{t+s}(0)
= (ev(0)\circ res(t+s))(\sigma) $$
so that $ev(s)\circ res(t)=ev(0)\circ res(t+s)$.
Recalling that $\mu(t)=(res(t)\circ ext(\mu))\sharp \mu(0)$ we obtain
\begin{align*}
ev(s)\sharp \mu(t)
 & =  (ev(s)\circ res(t)\circ ext(\mu))\sharp \mu(0) \\
& =  (ev(0)\circ res(t+s) \circ ext(\mu))\sharp \mu(0) \\
 & =  ev(0)\sharp (res(t+s) \circ ext(\mu))\sharp \mu(0) \\
& =  ev(0)\sharp \mu(t+s) \\
&  =  \tilde \mu(t+s),
\end{align*}
which is \eqref{Compatibility}.

It follows that the vector field $\F[\mu]$ and $\tilde \F[\tilde \mu]$ defined in
\eqref{VF} and \eqref{VF3} coincide:
$$ \F[\mu](t,x) =  \tilde \F[\tilde \mu](t,x) \qquad \text{for any $t\ge 0$ and $x\in\R^d$.}
$$
Indeed, in view of the definition \eqref{DefKImperfect} of $K$, we have
\begin{eqnarray*}
\F[\mu](t,x)
& = &  \int_E K(x,\sigma)\,\mu(t)(d\sigma)
= \int_{-\tau}^0 \Big( \int_E \tilde K(x,\sigma(s))\,\mu(t)(d\sigma)  \Big) \,d\rho(s) \\
& = &
\int_{-\tau}^0 \Big( \int_{\R^d} \tilde K(x,y)\,(ev(s)\sharp\mu(t))(dy)  \Big) \,d\rho(s).
\end{eqnarray*}
Using \eqref{Compatibility} this is
\begin{align*}
\F[\mu](t,x)
 & = \int_{-\tau}^0 \Big( \int_{\R^d} \tilde K(x,y)\,\tilde\mu(t+s)(dy)  \Big) \,d\rho(s) \\
 & =  \tilde \F[\tilde \mu](t,x).
\end{align*}

We can now prove that $\tilde\mu(t):=ev(0)\sharp \mu(t)$ is the unique solution
of \eqref{MFEqu} with initial condition $ev(t)\sharp \mu(0)$, $t\in [-\tau,0]$.
 According to the proof of Theorem \ref{TeoExist} (see the definition of
 $\Gamma$) this is equivalent to proving that
\begin{equation*}
\tilde\mu(t) =
\begin{cases}
ev(t)\sharp \mu(0), \quad t\in [-\tau,0], \\
\tilde T(t,\cdot;\tilde \mu)\sharp \tilde \mu(0) \quad t\ge 0,
\end{cases}
\end{equation*}
where $\tilde T(t,x;\tilde \mu)$ is the flow of the vector field $\tilde F[\tilde\mu]$.
The equality for $t\in [-\tau,0]$ follows from \eqref{Compatibility}.
For $t\ge 0$, using  the definition \eqref{definicionSt} of $ext(\mu)$,
we have for any $\sigma\in E$,
\begin{align*}
(ev(0)\circ res(t)\circ ext(\mu))(\sigma)
=  ext(\mu)(\sigma)(t) = \T(t,\sigma(0);\mu)
\end{align*}
where  $\T(t,x;\mu)$ is the flow of the vector field $\F[\mu]$. Thus,
$$ ev(0)\circ res(t)\circ ext(\mu) = \T(t,\cdot;\mu)\circ ev(0). $$
It follows that
\begin{align*}
 \tilde \mu(t)
 & =  ev(0)\sharp \mu(t) \\ 
 & =  (ev(0)\circ res(t)\circ ext(\mu))\sharp \mu(0)\\ 
& = \T(t,\cdot;\mu)\sharp (ev(0)\sharp \mu(0)) \\
&  =  \T(t,\cdot;\mu)\sharp \tilde\mu(0).
\end{align*}
Since $\F[\mu]=\tilde\F[\tilde\mu]$, we have $T(t,x;\mu)=\tilde T(t,x;\tilde \mu)$. Thus,
$\tilde \mu(t) = \tilde\T(t,\cdot;\tilde \mu)\sharp \tilde\mu(0)$ for any $t\ge 0$, which
completes the proof.

\bibliography{bibliodelay}
\bibliographystyle{plain}

\end{document}